\documentclass[11pt,dvips]{article}

\usepackage[english]{babel}
\usepackage{amssymb}
\usepackage{amsmath}
\usepackage{amsthm}
\usepackage{latexsym}
\usepackage{mathrsfs}
\usepackage{enumerate}
\usepackage{verbatim}
\usepackage{multirow}
\usepackage[dvips]{graphicx}		  
\usepackage{epstopdf}				  
\usepackage{xcolor}

\usepackage{fancyhdr}
\pagestyle{fancy}
\fancyhf{}
\lhead{Het. Pop. Dyn. \& Scaling Laws}



\setlength{\textheight}{21.0cm} 
\setlength{\textwidth}{16.0cm}
\setlength{\oddsidemargin}{0cm} 
\setlength{\evensidemargin}{0cm}

\setlength{\arraycolsep}{2pt}

\newtheorem{theo}{Theorem}
\newtheorem{prop}{Proposition}
\newtheorem{lemma}{Lemma}


\def\R{\mathbb{R}}

\def\E{\mathbb{E}}

\def\cC{\mathcal{C}}
\def\cO{\mathcal{O}}
\def\cI{\mathcal{I}}


\def\txtd{{\textnormal{d}}}
\def\txte{{\textnormal{e}}}

\renewenvironment{proof}[1]{%
	\par\vspace{1\baselineskip}%
	\noindent{\bf Proof#1\ \ }\ignorespaces }{%
	\nobreak\hfill\mbox{\ \ $\Box$}%
	\par\vspace{1\baselineskip}%
}

\newcommand{\bino}{\bigskip\noindent}

\newcommand{\dd}{\mbox{\rm\,d}}

\newcommand{\ddd}{\mbox{\,{\scriptsize \rm d}}}
\newcommand{\ra}{\rightarrow}
\newcommand{\dt}{\frac{\dd}{\dd t}}

\newcommand{\C}{\mathbb{C}}

\newcommand{\bd}{\begin{displaymath}}
\newcommand{\ed}{\end{displaymath}}
\newcommand{\be}{\begin{equation}}
\newcommand{\ee}{\end{equation}}
\newcommand{\benn}{\begin{equation*}}
\newcommand{\eenn}{\end{equation*}}
\newcommand{\bea}{\begin{eqnarray}}
\newcommand{\eea}{\end{eqnarray}}
\newcommand{\bda}{\begin{eqnarray*}}
\newcommand{\eda}{\end{eqnarray*}}
\newcommand{\ba}{\begin{eqnarray*}\begin{aligned}}
\newcommand{\ea}{\end{aligned}\end{eqnarray*}}
\newcommand{\ban}{\begin{eqnarray}\begin{aligned}}
\newcommand{\ean}{\end{aligned}\end{eqnarray}}

\normalsize  

\begin{document}

\title{Heterogeneous Population Dynamics and Scaling Laws near Epidemic Outbreaks}

\author{A. Widder \thanks{ORCOS, Institute of Mathematical Methods in Economics, Vienna, 
University of Technology, Wiedner Hauptstrasse 8, A-1040 Vienna, Austria, e-mail: 
{\tt andreas.widder@tuwien.ac.at}.}
\and
C. Kuehn\thanks{Institute for Analysis and Scientific Computing, Vienna, 
University of Technology, Wiedner Hauptstrasse 8-10, A-1040 Vienna, Austria, e-mail: 
{\tt ck274@cornell.edu}.}}

\date{\today}
\maketitle

\begin{abstract}
In this 
paper, we focus on the influence of heterogeneity and stochasticity of the population on the 
dynamical structure of a basic susceptible-infected-susceptible (SIS) model. First 
we prove that, upon a suitable mathematical reformulation of the basic reproduction number, the
homogeneous system and the heterogeneous system exhibit a completely analogous global
behaviour. Then we consider noise terms to incorporate the fluctuation effects and the 
random import of the disease into the population and analyse the influence of heterogeneity 
on warning signs for critical transitions (or tipping points). This theory shows that one may 
be able to anticipate whether a bifurcation point is close before it happens. We use numerical 
simulations of a stochastic fast-slow heterogeneous population SIS model and show various aspects 
of heterogeneity have crucial influences on the scaling laws that are used as early-warning 
signs for the homogeneous system. Thus, although the basic structural qualitative dynamical 
properties are the same for both systems, the quantitative features for epidemic prediction 
are expected to change and care has to be taken to interpret potential warning signs for disease 
outbreaks correctly.
\end{abstract}

{\bf Keywords}: Epidemics, heterogeneous population, transcritical bifurcation, 
SIS-model, stochastic perturbation, warning signs, tipping point, critical
transition, reproduction number.

{\bf MSC Classification}: 34C60, 34D23, 37N25, 45J05, 91B69, 92B05. 

\section{Introduction} 
\label{intro}

Infectious diseases have a big influence on the livelihood (and indeed lives) of individual people 
as well as the performance of whole economies \cite{O}. The development and understanding of 
mathematical models that can explain and especially predict the spreading of such diseases is 
therefore of enormous importance. A seminal work in this area was provided by Kermack and 
McKendrick in 1927 \cite{KM}. Up to this day their model is used as basis for analysis, although 
it has of course been extended in numerous ways. One such way is to consider heterogeneous populations. 
This is due to the realisation that individual people differ in their genetics, biology and social 
behaviour in ways that influence the spreading of infectious diseases. One type of model treats these 
individual traits as a static parameter \cite{CMLB, HR, N}. Since these parameters have a certain 
distribution amongst the population some information may be gained by studying the moments of 
this distribution \cite{D, V}. Other models deal with time varying heterogeneities like age or 
duration of the infection \cite{AI, FTV, I}. For a more complete overview of different ways to model 
heterogeneity in this context we refer to textbooks on mathematical epidemiology such 
as \cite{BDW, DH, KR}.

\medskip
In this paper we will exclusively deal with susceptible-infected-susceptible, in short SIS, 
models. These models assume that an individual is either infected or susceptible, and furthermore 
that an infected individual recovers from the infection with no lasting immunity and immediately 
becomes susceptible again. One of the main applications of SIS models are sexually transmitted 
diseases \cite{BC1,GA,HS,YHN}, but other bacterial infections can also be modelled this way \cite{H}. 
There are also applications of this model outside of biology, for example in the study of spreading 
of computer viruses \cite{KW,WM} or social contagions \cite{HRNC}. Heterogeneous versions of 
this model also have a long history, see for example \cite{LY}.\medskip

One feature that is present in most of these models is the existence of a threshold that fundamentally 
influences the behaviour of the system. This threshold is usually given in terms of the basic 
reproduction number $R_0$. If this number is smaller than one, then a disease can not lead to an 
outbreak and usually a disease free population rests, mathematically speaking, in a stable steady 
state. If $R_0$ is however bigger than one, then a disease can become endemic in a population. In 
many diseases this number is not constant but is susceptible to seasonal or other environmental 
changes \cite{ADH, KR}. Hence, it is important to provide rigorous mathematical analysis, how
$R_0$ has to be viewed for heterogeneous populations \cite{CHJ}.\medskip

Once we understand this influence of heterogeneity, then it is of great interest to analyse possible warning 
sings that indicate the approach of $R_0$ to the critical value, when $R_0$ depends upon parameters. One 
approach to model this setup is to consider epidemic dynamics as a multiple time scale system where 
the population dynamics, including infection and recovery, are fast while parameters influencing $R_0$ 
drift slowly, so that $R_0$ increases from the sub-threshold regime $R_0<1$ to the critical value $R_0=1$. 
Considering also stochastic perturbations, it has been shown in various epidemic models \cite{KuehnCT2,OD} 
that there exist warning signs for the upcoming critical value when recoding time series from the 
sub-threshold regime $R_0<1$. We follow in this vein and study how incorporating heterogeneity into 
a stochastic SIS model influences the warning sings of an impending critical transition.\medskip

Our two main results for dynamics and warning signs for heterogeneous SIS models can be summarized 
on a non-technical level as follows:

\begin{itemize}
 \item[(R1)] We prove a theorem, how the global dynamical structure of the deterministic (i.e.\ no noise)
 homogeneous population model is preserved when the homogeneous population is replaced by a heterogeneous one.
 In particular, the result shows that upon very reasonable modelling assumptions on the heterogeneity,
 the homogeneous and heterogeneous models have the same basic bifurcation structure with an epidemic threshold
 at $R_0=1$. 
 \item[(R2)] We extend the heterogeneous SIS model by stochastic perturbations as well as 
 by slow parameter dynamics. We use numerical simulations to investigate warning signs for epidemic outbreaks
 based upon scaling laws of the variance in the sub-threshold regime. We show that the rate of variance 
 can change below the epidemic threshold when
 \begin{enumerate}
 \item[(a)] a cut-off for the heterogeneities is considered, 
 \item[(b)] a discretise distribution of heterogeneities is considered, 
 \item[(c)] if the system interacts with the upper and lower level population boundaries,
 \item[(d)] if the transmission rate cannot be separated into a product of parametric drift and 
 contribution from heterogeneity.
 \end{enumerate}
 We also provide first steps to explain (a)-(d) on a non-rigorous level via formal calculations
 and considering the influences of various terms in the model.
\end{itemize}

The main implications for prediction and management of epidemic outbreaks are twofold. First, an 
epidemic threshold still exists for heterogeneous populations. It may shift due to the distribution of 
types in the heterogeneous population considered but we still have a tipping point or critical 
transitions towards an endemic state. This shows that there is a need to develop warning signs that can
be applied before the outbreak. However, warning signs from homogeneous population models do not
directly generalize to the heterogeneous situation. In particular, the functional form at which
the warning sign of variance rises, does depend crucially on many additional factors, which are 
not predicted by simple homogeneous SIS-models.\medskip 

The paper is structured as follows. In Section \ref{HomMod} we briefly review a homogeneous SIS-model 
as a baseline for our considerations. We state some known results about this model that are relevant 
to our analysis. In Section \ref{HetMod} we introduce the heterogeneous model we wish to study. In
Section \ref{Persist} we state, prove, and interpret the first main result (R1). In Section \ref{Extend}
we explain, why we extend the model by a slow parameter drift and by a noise term. Furthermore, we explain 
some background from the theory of warning signs for stochastic multiscale SIS models with homogeneous
populations. In Section \ref{NumRes}, we numerically analyse the influence that heterogeneity has on 
the behaviour of the system near bifurcation point by looking at the variance as warning sign. In Section
\ref{Expl}, we provide a few first steps to explain the numerical observations. In particular, Sections
\ref{NumRes}-\ref{Expl} provide the details for our second main result (R2). We conclude in Section 
\ref{Outlook} with an outlook of future problems for epidemic models with heterogeneous populations 
which arose during our analysis.\medskip

\textbf{Acknowledgements:} AW would like to thank the Austrian Science Foundation (FWF) for 
support under grant P 24125-N13. CK would like to thank the Austrian Academy of Science (\"OAW) 
for support via an APART Fellowship and the EU/REA for support via a Marie-Curie Integration
Re-Integration Grant. Both authors acknowledge very stimulating discussions with Vladimir Veliov 
at the beginning of this work.

\section{The homogeneous model} 
\label{HomMod}

Basic SIS-models are well understood and an in-depth discussion of them can be found in introductory 
books about mathematical epidemiology (e.g.\ \cite{BC,BDW,KR}). As a baseline homogeneous model we 
use
\ban \label{Eq_hom_mod}
	\dot{S}(t) &= 
	-\beta \frac{I(t)}{S(t)+I(t)}S(t) - \eta S(t) + \gamma I(t),\quad S(0)=S_0\geq 0,\\
	\dot{I}(t) &= 
	\beta \frac{I(t)}{S(t)+I(t)}S(t) + \eta S(t) - \gamma I(t), \quad I(0)=I_0\geq 0,
\ean
where $\frac{\txtd}{\txtd t}=\dot{~}$ denotes the time derivative, $\beta>0$ is the transmission rate 
and $\gamma>0$ the recovery rate. The parameter $\eta$ models the propagation of the disease due to imported 
cases of the infection. Such import can, for example, be explained by brief contacts with individuals outside 
of the population (see \cite{KR,OD}). The parameter $\eta$ has also been used with different interpretations. In \cite{HRNC} and \cite{HRNC2} $\eta$ denotes spontaneous self-infection in the transmission of social contagions. In \cite{SDC} and \cite{YCF} $\eta$ is a time dependent function modelling an infective medium. Furthermore, in \cite{ABDWZ} the mean field approximation of the $\epsilon-$SIS model introduced in \cite{VC} is presented. It too is an (heterogeneous) SIS-model with positive $\eta$.  The general model with $\eta>0$ will be used in the analysis of the steady states and bifurcation of the deterministic heterogeneous model. For the analysis of the stochastic model it will be included in the noise term.

 Note that the positive quadrant is invariant for \eqref{Eq_hom_mod}
so our choice of initial conditions ensures that the population sizes of infected and susceptibles remain
non-negative. We shall only consider $(I(t),S(t))\in[0,+\infty) \times [0,+\infty)$ for $t\geq 0$ from
now on.

\medskip
By adding the two equations in \eqref{Eq_hom_mod}, it is easy to see that $S(t)+I(t)$ is constant 
in this model. Because of the structure of \eqref{Eq_hom_mod} we can assume without loss of generality 
that $S(t)+I(t)=1$, since re-scaling both variables $S(t)$ and $I(t)$  by the inverse of the population 
size yields a total population of size $1$. By substituting $S(t)=1-I(t)$ into the equation for $I(t)$ we 
can describe the system by the single equation
\ban \label{Eq_hom_I}
	\dot{I}(t) &= \beta(1-I(t))I(t) + \eta(1-I(t)) - \gamma I(t),\quad I(0)=I_0.
\ean
If $\eta=0$ then we define $R_0=\frac{\beta}{\gamma}$, known as the basic reproduction number. 
If $R_0 \leq 1$ then (\ref{Eq_hom_I}) has single steady state, $I^*\equiv 0$, that is globally asymptotically 
stable. If $R_0>1$ then (\ref{Eq_hom_I}) has two steady states. The steady state $I^*\equiv0$ remains 
but is now unstable. The second steady state is $I^{**}\equiv \frac{\beta-\gamma}{\beta}$ and is globally 
asymptotically stable with the exception of $I_0=0$. From a mathematical perspective, this 
exchange-of-stability happens at a transcritical bifurcation when $(I,R_0)=(0,1)$. If $\eta>0$ 
then (\ref{Eq_hom_I}) always has one steady state. It is globally asymptotically stable, {i.e.}, all
non-negative initial conditions yield trajectories that are attracted in forward time to the steady state.

\section{The heterogeneous model} 
\label{HetMod}

We now modify the baseline model \eqref{Eq_hom_mod} by dividing the population according to some 
trait that is relevant to the spreading of the disease. This can indicate social behaviour like 
contact rates or biological traits like natural resistance towards the disease (for a detailed interpretation 
of heterogeneity we refer to introductory works in epidemiology, e.g.\ \cite{DH}). Each individual 
is assigned a heterogeneity state (h-state) $\omega$ which lies in some set $\Omega$. This $\omega$ 
can of course also be a vector carrying information about more than one trait.

\medskip
We assume that the disease spreads amongst the population of each h-state according to the dynamics
\ban\label{Eq_mod_het}
	\dot{S}(t,\omega) &= -\beta(\omega)\frac{J(t)}{T(t)+J(t)}S(t,\omega) - \eta(\omega) S(t,\omega) 
	+ \gamma(\omega) I(t,\omega),\quad S(0,\omega)=S_0(\omega)\\
	\dot{I}(t,\omega) &= \beta(\omega)\frac{J(t)}{T(t)+J(t)}S(t,\omega) + \eta(\omega) S(t,\omega)  
	- \gamma(\omega) I(t,\omega),\quad I(0,\omega)=I_0(\omega),
\ean
where we use the definitions
\benn
T(t):=\int_\Omega q(\omega)S(t,\omega)\dd\omega \qquad J(t):=\int_\Omega q(\omega)I(t,\omega)\dd\omega.
\eenn
Here we consider the following variables:
\begin{itemize}
\item $q(\omega)$ is the intensity of participation in risky interactions of an individual with 
h-state $\omega$,
\item $\beta(\omega)=\rho(\omega)q(\omega)$ where $\rho(\omega)$ is the force of infection of 
the disease towards an individual with h-state $\omega$,
\item $\gamma(\omega)$ is the recovery rate for an individual with h-state $\omega$,
\item $\eta(\omega)$ is the h-state dependent fraction of individuals that become infected 
through the import of the infection from outside the population.
\end{itemize}
Of course, $\eta(\omega)$ can also take any of the different interpretations mentioned in section \ref{HomMod}. Note that if all these variables are constant then the heterogeneous system (\ref{Eq_mod_het}) is 
equivalent to the homogeneous system (\ref{Eq_hom_mod}).\medskip

We want to make a short note about two aspects of this model. One is that for each $\omega$ the 
population $S(t,\omega)+I(t,\omega)$ is obviously constant. This implies that $\omega$ itself is 
not influenced by the disease and an individual that has h-state $\omega$ at the beginning remains 
in that h-state for the duration of our consideration. The second aspect is the transmission 
function $\frac{J(t)}{T(t)+J(t)}$. Transmission functions of this type have been used before 
\cite{D,FTV,V}. Such a transmission function can for example be derived by assuming a 
population with a heterogeneous social contact network \cite{N}. Models with such populations 
are at the centre of intensive current research (see e.g.\ \cite{BGM, BRPM, DHHE, KE}).

\medskip
We now formulate the mathematical assumptions for the heterogeneous population epidemic model 
used in our subsequent analysis. The set $\Omega$ is a complete Borel measurable space with a 
nonnegative measure $\mu$ and $\int_\Omega\dd\mu(\omega)=1$. All integration with respect to 
$\omega$ is taken to be with respect to that measure. All functions and parameters are assumed 
to be nonnegative and measurable with respect to $\mu$. Since $S(t,\omega)+I(t,\omega)$ is constant 
we can introduce a density function 
$f(\omega):=S(t,\omega)+I(t,\omega)$. We can assume without loss of generality that 
$\int_\Omega f(\omega)\dd\omega=1$. The function $q(\omega)$ is taken to be positive almost 
everywhere on $\Omega$, i.e.\ there is always some probability for risky interaction for each
h-state. We have
\ba
	T(t)+J(t) = \int_\Omega q(\omega)(S(t,\omega)+
	I(t,\omega))\dd\omega = \int_\Omega q(\omega)f(\omega)\dd\omega = C
\ea
for some constant $C>0$. By using  $\frac{q(\omega)}{C}$ instead of $q(\omega)$, we can assume 
without loss of generality that $T(t)+J(t)=1$. We also assume that the three functions 
$\beta(\omega)$, $\gamma(\omega)$ and $\eta(\omega)$ are bounded, which makes sense from a modelling
viewpoint. Furthermore, we assume there exists an $\varepsilon>0$ such that
\benn
\inf\limits_{\omega\in\Omega}\beta(\omega)\geq\varepsilon\quad  \text{and} 
\quad \inf\limits_{\omega\in\Omega}\gamma(\omega)\geq\varepsilon.
\eenn
These assumptions just mean that the transmission probability is never equal to zero when infected
and susceptible individuals meet and that there is always at least some positive, albeit potentially 
very long, time after which any infected individual recovers from the disease.  
An important consequence of these assumptions is that the functions $S(t,\cdot)$, $I(t,\cdot)$ are 
measurable for every $t\geq 0$ (see Theorem~1 in \cite{V2}).
For $\eta(\omega)$ we consider two cases. First the case that there exist a set $A\subseteq\Omega$ 
with positive measure such that $\eta(\omega)f(\omega)>0$ for $\omega\in A$. We denote this 
case by $\eta>0$. The second case where such a set does not exist will be denoted by $\eta=0$.

\medskip
Using $S(t,\omega)=f(\omega)-I(t,\omega)$ and $T(t)+J(t)=1$ we can describe the system 
(\ref{Eq_mod_het}) by
\ban\label{Eq_I}
	\dot{I}(t,\omega)&=(\beta(\omega)J(t)+\eta(\omega))f(\omega)-(\beta(\omega)J(t)+
	\eta(\omega)+\gamma(\omega))I(t,\omega),\quad I(0,\omega)=I_0(\omega),\\
	J(t)&=\int_\Omega q(\omega)I(t,\omega)\dd\omega.
\ean
It is now a natural question to ask which dynamical features are shared by the homogeneous population 
ordinary differential equation (ODE) given by \eqref{Eq_hom_I} and the heterogeneous population 
differential-integral equation \eqref{Eq_I}.

\section{Persistence of Dynamical Structure} 
\label{Persist}

In this section we show that in terms of steady state solutions and their stability properties 
the system (\ref{Eq_I}) exhibits the same behaviour as the system (\ref{Eq_hom_I}).

\begin{theo}\label{Th_stab}
If $\eta>0$ then the system (\ref{Eq_I}) has a unique steady state solution. This solution is 
globally asymptotically stable. If $\eta=0$ we define the basic reproduction number 
\ban\label{Eq_R0}
	R_0=\int_\Omega q(\omega)f(\omega)\frac{\beta(\omega)}{\gamma(\omega)}\dd\omega.
\ean
If $R_0\leq 1$ then (\ref{Eq_I}) has the unique steady state solution $I(t,\omega)=0$. This 
solution is globally asymptotically stable. If $R_0>1$ then (\ref{Eq_I}) has exactly two steady 
state solutions, one of which is $I(t,\omega)=0$. In this case, the solution $I(t,\omega)=0$ is 
an unstable 
steady state solution while the second steady state solution is globally asymptotically stable 
with the exception of $I_0(\omega)=0$ a.e.\ on $\Omega$.
\end{theo}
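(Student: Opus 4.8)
The plan is to reduce the steady-state count to a scalar fixed-point problem and then to extract the global dynamics from a comparison (quasimonotonicity) argument.

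\textbf{Steady states.} A time-independent solution $\bar I(\cdot)$ of \eqref{Eq_I} must satisfy, for a.e.\ $\omega$, $(\beta(\omega)J+\eta(\omega))f(\omega)=(\beta(\omega)J+\eta(\omega)+\gamma(\omega))\bar I(\omega)$ with $J=\int_\Omega q(\omega)\bar I(\omega)\dd\omega$; hence $\bar I$ is uniquely determined by the scalar $J$ through $\bar I(\omega)=g(\omega,J):=\frac{(\beta(\omega)J+\eta(\omega))f(\omega)}{\beta(\omega)J+\eta(\omega)+\gamma(\omega)}\in[0,f(\omega)]$, and the admissible $J$ are exactly the fixed points in $[0,1]$ of
\[
\Phi(J):=\int_\Omega q(\omega)\,g(\omega,J)\,\dd\omega .
\]
I would record the elementary properties of $\Phi$: it is finite on $[0,\infty)$ (integrand $\le qf$), strictly increasing and strictly concave (differentiate under the integral; $\inf\beta,\inf\gamma\ge\varepsilon>0$, $q>0$ a.e., and $\int f=1$ make $q\beta^2 f\gamma>0$ on a set of positive measure), and $\Phi(1)<1$ since $\gamma>0$. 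Moreover $\Phi(0)=\int_\Omega q\,\eta f/(\eta+\gamma)\,\dd\omega>0$ exactly in the case $\eta>0$, while for $\eta=0$ one has $\Phi(0)=0$ and $\Phi'(0)=\int_\Omega q f\,\beta/\gamma\,\dd\omega=R_0$ from \eqref{Eq_R0}. A strictly concave function $h(J):=\Phi(J)-J$ with $h(1)<0$ has exactly one zero in $(0,1)$ when $h(0)>0$, only $J=0$ when $h(0)=0$ and $h'(0)\le 0$, and exactly the two zeros $0$ and some $J^{**}\in(0,1)$ when $h(0)=0<h'(0)$. This yields the stated number of steady states in each regime.

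\textbf{Comparison and the extreme solutions.} The region $\{0\le I(\cdot)\le f(\cdot)\}$ is positively invariant (on its two faces the right-hand side of \eqref{Eq_I} points inward), and there the equation is quasimonotone: the only $\omega$-coupling is through $J=\int qI$, which is order-preserving, and $\partial_J\dot I(\omega)=\beta(\omega)(f(\omega)-I(\omega))\ge 0$. Hence a comparison principle holds --- ordered initial data yield ordered solutions --- and I would use it twice. First, let $\overline I(t,\cdot),\underline I(t,\cdot)$ be the solutions from $f(\cdot)$ and from $0$; since $f$ is a supersolution and $0$ a subsolution, $\overline I$ is nonincreasing and $\underline I$ nondecreasing in $t$, so both converge pointwise, monotonically, to limits $I^{+}\ge I^{-}$, and $\overline J(t),\underline J(t)\to J^{\pm}:=\int qI^{\pm}$ by dominated convergence. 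Freezing $\omega$, $\overline I(t,\omega)$ solves a scalar linear nonautonomous ODE with damping $\ge\varepsilon$ and coefficients converging as $t\to\infty$; it is therefore asymptotically autonomous with limit equilibrium $g(\omega,J^{+})$, so $I^{+}(\omega)=g(\omega,J^{+})$, whence $J^{+}=\Phi(J^{+})$, and likewise $J^{-}=\Phi(J^{-})$. Every admissible solution is sandwiched, $\underline I(t,\cdot)\le I(t,\cdot)\le\overline I(t,\cdot)$. When $\eta>0$, or when $\eta=0$ and $R_0\le 1$, $\Phi$ has a unique fixed point, so $J^{+}=J^{-}$, $I^{+}=I^{-}=:I^{*}$ is the unique steady state, and the sandwich forces $I(t,\cdot)\to I^{*}$ for all admissible data; Lyapunov stability of $I^{*}$ follows from the same order structure (data in a small order interval around $I^{*}$ stay there, both extreme solutions of the interval converging monotonically to $I^{*}$).

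\textbf{The supercritical case $\eta=0$, $R_0>1$.} Here $\underline I\equiv 0$ is useless, so the lower barrier must be a small positive subsolution. Since $R_0>1$, the equation $\int_\Omega q(\omega)\beta(\omega)f(\omega)/(\lambda+\gamma(\omega))\,\dd\omega=1$ has a unique root $\lambda_0>0$ (the left side is continuous, strictly decreasing, equals $R_0$ at $0$, and tends to $0$); with $v(\omega):=\beta(\omega)f(\omega)/(\lambda_0+\gamma(\omega))$ --- the principal eigenfunction of the linearization of \eqref{Eq_I} at $0$, normalized so $\int qv=1$ --- a short computation gives $\dot I=\delta v(\omega)(\lambda_0-\delta\beta(\omega))$ when $I=\delta v$, so for $\delta$ small enough $\delta v$ satisfies $0\le\delta v\le f$ and is a subsolution. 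Consequently the solution from $\delta v$ is nondecreasing in $t$ and converges (by the asymptotically-autonomous argument again) to a steady state with $J\ge\delta>0$, which can only be $J^{**}$; in particular $I\equiv 0$ is unstable. For arbitrary $I_0\not\equiv 0$ (equivalently $J(0)>0$, since $q>0$ a.e.), the variation-of-constants formula for \eqref{Eq_I} together with $J(s)\ge e^{-Ms}J(0)>0$ (with $M$ bounding $\beta J+\gamma$) yields $I(t_1,\omega)\ge c\,\beta(\omega)f(\omega)$ for some $c=c(t_1,I_0)>0$; choosing $\delta$ also so small that $\delta v\le c\,\beta f$, comparison from time $t_1$ gives $I(t,\cdot)\ge$ (solution from $\delta v$ at time $t-t_1$) $\to I^{*}$, while $I(t,\cdot)\le\overline I(t,\cdot)\to I^{+}$, and $I^{+}\ge I^{*}$ (since $\overline I\ge$ the solution from $\delta v$) gives $J^{+}\ge J^{**}>0$, whence $J^{+}=J^{**}$ and $I^{+}=I^{*}$; hence $I(t,\cdot)\to I^{*}$.

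\textbf{Main obstacle.} The delicate points are the rigorous comparison principle and the existence/regularity (measurability) theory for the integro-differential equation \eqref{Eq_I} in the appropriate function space --- this is where the cited analytic framework enters --- together with the eigenfunction subsolution construction in the supercritical regime; once these are in place, the scalar analysis of $\Phi$ and the asymptotically-autonomous limiting argument are routine.
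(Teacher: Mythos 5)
Your proposal is correct in outline and reaches all the stated conclusions, but it takes a genuinely different route from the paper on the dynamical part. The steady-state count is identical: the paper also reduces to the scalar function $g(x)=\int_\Omega q(\omega)f(\omega)\frac{\beta(\omega)x+\eta(\omega)}{\beta(\omega)x+\eta(\omega)+\gamma(\omega)}\dd\omega-x$ and uses $g(1)<0$, strict concavity, and $g'(0)=R_0-1$, exactly as you do with $\Phi(J)-J$. Where you diverge is in proving convergence. The paper never invokes monotonicity of the flow: its central technical step is a direct proof that $J(t)=\int_\Omega q(\omega)I(t,\omega)\dd\omega$ converges, obtained by assuming $\limsup J>\liminf J$ and running a five-step contradiction argument (relegated to an appendix) built on quantitative estimates for the nullcline function $h(J,\omega)$ and the sign relation $\dot I\lessgtr0\Leftrightarrow I\gtrless h(J,\omega)$; convergence of $I(t,\omega)$ then follows $\omega$-wise from that same sign relation. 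For the supercritical lower bound the paper shows that $\int_\Omega\frac{q(\omega)}{\gamma(\omega)}I(t,\omega)\dd\omega$ is increasing as long as $J$ is small and positive, rather than constructing an eigenfunction subsolution. Your strategy --- positively invariant order interval $[0,f]$, cooperativity via $\partial_J\dot I=\beta(f-I)\ge0$, monotone extreme trajectories from $0$ and $f$, the sandwich, and the principal-eigenfunction subsolution $\delta v$ with $\dot I=\delta v(\lambda_0-\delta\beta)$ --- is the standard monotone-systems route and is arguably cleaner; it also surfaces the principal eigenvalue $\lambda_0$ that the paper only encounters later in its linearisation discussion of Section 7.4, and it gives instability of $0$ for free. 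What it costs is the need to rigorously justify well-posedness and the comparison principle for this infinite-dimensional cooperative integro-differential system, which you correctly flag as the main obstacle and which the paper sidesteps by working directly with the scalar quantity $J(t)$. Both proofs are equally informal about Lyapunov stability proper, deducing it from global attraction plus the order or sign structure.
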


\begin{proof}{.}
We first show that the system does indeed have the number of steady states we claim it has. 
Let $\hat{I}(\omega)$ be a steady state of (\ref{Eq_I}) and 
$\hat{J}=\int_\Omega q(\omega)\hat{I}(\omega)\dd\omega$. As a steady state of (\ref{Eq_I}) 
$\hat{I}(\omega)$ is characterised by the equation
\ban\label{Eq_Ihat}
\hat{I}(\omega)=f(\omega)\frac{\beta(\omega)\hat{J}+\eta(\omega)}{\beta(\omega)\hat{J}
+\eta(\omega)+\gamma(\omega)}.
\ean
Plugging this into the equation for $\hat{J}$ yields
\ban \label{Eq_Jhat}
\hat{J}=\int_\Omega q(\omega)f(\omega)\frac{\beta(\omega)\hat{J}+\eta(\omega)}{\beta(\omega)\hat{J}+\eta(\omega)+\gamma(\omega)}\dd\omega.
\ean
Every solution $\hat{J}$ to (\ref{Eq_Jhat}) yields a steady state of (\ref{Eq_I}) by putting it into equation (\ref{Eq_Ihat}). Thus, we are searching for the roots of the function 
\ba
g(x)=\int_\Omega q(\omega)f(\omega)\frac{\beta(\omega)x+\eta(\omega)}{\beta(\omega)x+\eta(\omega)+\gamma(\omega)}\dd\omega-x
\ea
in the interval $[0,1]$. We have 
\ba
g(0)=\int_\Omega q(\omega)f(\omega)\frac{\eta(\omega)}{\eta(\omega)+\gamma(\omega)}\dd\omega
\ea
and
\ba
g(1)=\int_\Omega q(\omega)f(\omega)\frac{\beta(\omega)+\eta(\omega)}{\beta(\omega)+\eta(\omega)+\gamma(\omega)}\dd\omega-1< \int_\Omega q(\omega)f(\omega)\dd\omega-1=T(t)+J(t)-1=0.
\ea
A simple calculation yields
\ba
	g'(x)&=\int_\Omega q(\omega)f(\omega)\frac{\beta(\omega)\gamma(\omega)}{(\beta(\omega)x+\eta(\omega)+\gamma(\omega))^2}\dd\omega-1,\\
	g''(x)&=\int_\Omega q(\omega)f(\omega)\frac{-\beta(\omega)\gamma(\omega)2(\beta(\omega)x+\eta(\omega)+\gamma(\omega))\beta(\omega)}{(\beta(\omega)x+\eta(\omega)+\gamma(\omega))^4}\dd\omega.
\ea
Note that the second derivative is always negative. We consider the case $\eta>0$ first. We know 
that $g(x)=0$ has a solution since $g(0)>0$ and $g(1)<0$. Since $g(x)$ is concave this solution 
is unique.

Consider now the case $\eta=0$. In this case $g(0)=0$, so $0$ is a solution. If $g'(0)\leq 0$ then $g(x)$ negative on the whole interval $[0,1]$ due to the concavity of $g(x)$. If however $g'(0)>0$ then $g(x)$ is positive for small enough $x$. Using same reasoning as in the case $\eta>0$ we see that there exists a unique positive solution to $g(x)=0$. We therefore need to determine whether $g'(0)>0$. Since $\eta=0$ this is given by
\ba
g'(0)=\int_\Omega q(\omega)f(\omega)\frac{\beta(\omega)}{\gamma(\omega)}\dd\omega-1=R_0-1.
\ea
We see that if $R_0\leq1$ then $g'(0)\leq 0$ and $0$ is the only solution to $g(x)=0$, if $R_0>1$ then $g'(0)>0$ and there exists a unique solution in of $g(x)=0$ in $(0,1)$ alongside the solution $0$.

\bino
Now we want to show that the system converges to a steady state. In order to do this, we first need 
to show that $J(t)$ converges. In particular, we want to prove:

\begin{lemma}
\label{lem:convergence}
The limit $J^*=\lim\limits_{t\ra +\infty}J(t)$ exists. Furthermore,
\ban\label{Idot_sign}
\dot{I}(t,\omega)\lessgtr0\Leftrightarrow I(t,\omega)\gtrless \frac{ f(\omega)(\beta(\omega)J(t)+\eta(\omega))}{\beta(\omega)J(t)+\eta(\omega)+\gamma(\omega)}.
\ean
\end{lemma}

The proof of Lemma \ref{lem:convergence} is one major difficulty in this proof. However, the 
argument is quite lengthy and technical; hence we include it in Appendix \ref{ap:lemma}.

\bino
Now that we know that $J(t)$ converges it remains to show that if $\eta=0$ and $R_0>1$ then $J(t)$ converges a positive value and not to $0$ unless $I_0(\omega)=0$ a.e.\ on $\Omega$. Consider the inequality
\ba
\sup_{\zeta\in\Omega}\left(\frac{\beta(\zeta)}{\gamma(\zeta)}\right)J(t)=  \int_\Omega q(\omega)\sup_{\zeta\in\Omega}\left(\frac{\beta(\zeta)}{\gamma(\zeta)}\right)I(t,\omega)\dd\omega\geq \int_\Omega q(\omega)\frac{\beta(\omega)}{\gamma(\omega)}I(t,\omega)\dd\omega.
\ea
Thus, if $J(t)$ is positive and sufficiently small we have
\ba
&&R_0-1&>\int_\Omega q(\omega)\frac{\beta(\omega)}{\gamma(\omega)}I(t,\omega)\dd\omega\\
\Leftrightarrow&&J(t)R_0-J(t)&>\int_\Omega q(\omega)\frac{\beta(\omega)}{\gamma(\omega)}J(t)I(t,\omega)\dd\omega\\
\Leftrightarrow&&J(t)\int_\Omega q(\omega)f(\omega)\frac{\beta(\omega)}{\gamma(\omega)}\dd\omega-\int_\Omega q(\omega)I(t,\omega)\dd\omega &>\int_\Omega q(\omega)\frac{\beta(\omega)}{\gamma(\omega)}J(t)I(t,\omega)\dd\omega\\
\Leftrightarrow&&\int_\Omega \frac{q(\omega)}{\gamma(\omega)}\left(f(\omega)\beta(\omega)J(t)-\gamma(\omega)I(t,\omega)- \beta(\omega) J(t)I(t,\omega)\right)\dd\omega &>0\\
\Leftrightarrow&&\int_\Omega\frac{q(\omega)}{\gamma(\omega)}\dot{I}(t,\omega)\dd\omega&>0.
\ea
This shows that the term $\int_\Omega\frac{q(\omega)}{\gamma(\omega)}I(t,\omega)\dd\omega$ is monotonically increasing. But since
\ba
\int_\Omega\frac{q(\omega)}{\gamma(\omega)}I(t,\omega)\dd\omega\leq\frac{1}{\displaystyle\inf_{\omega\in\Omega}\gamma(\omega)}J(t),
\ea
we see that $J(t)$ is bounded below by a positive, monotonically increasing function. Therefore it can not converge to $0$. Since $I_0(\omega)>0$ on a set of positive measure we have that $J(0)>0$. Thus, $J(t)$ converges to a positive value.\\
Conversely, if $I_0(\omega)=0$ a.e.\ on $\Omega$ then $J(0)=0$. Directly from (\ref{Eq_I}) we see that in this case $\dot{I}(t,\omega)=0$ for a.e.\ $\omega\in\Omega$ and thus $J(t)=0$ for all $t\geq 0$.\\

Since $J(t)$ converges, the convergence of $I(t,\omega)$ follows immediately from (\ref{Idot_sign}). Obviously 
the limit of $I(t,\omega)$ is one of the steady states we identified above. We have also shown that if there are two steady states then $I(t,\omega)$ converges to the positive one, unless $I_0(\omega)=0$ a.e.\ on $\Omega$. Since in all other cases the convergence of $I(t,\omega)$ is independent of the initial data, the claim about asymptotic stability is proven. 
\end{proof}

\bino
In the case $\eta=0$ the value $R_0$ acts as a threshold value that determines whether there exists an endemic steady state or not. So far $R_0$ has only this mathematical meaning. The basic reproduction number is however a biological concept. Using the definition given in \cite{DHM}, the basic reproduction number is defined as the expected number of secondary cases produced, in a completely susceptible population, by a typical infected individual during its entire period of infectiousness. We now want to show that the value $R_0$ as we defined it coincides with this definition. Also in $\cite{DHM}$ the following result was obtained.
\begin{prop}\label{BRN}
Let $S(\omega)$ denote the density function of susceptibles describing the steady 
demographic state in the absence of the disease. Let $A(\tau,\zeta,\omega)$ be 
the expected infectivity of an individual which was infected $\tau$ units of time ago, 
while having h-state $\omega$ towards a susceptible which has h-state $\zeta$. Assume that
\ba
\int\limits_{0}^{\infty}A(\tau,\zeta,\omega)\dd\tau = a(\zeta)b(\omega).
\ea
Then the basic reproduction number $R_0$ for the system is given by
\ba
R_0=\int_\Omega a(\omega)b(\omega)S(\omega)\dd\omega.
\ea
\end{prop}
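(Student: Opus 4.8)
The natural route is via the next-generation operator, the framework underlying the definition of $R_0$ in \cite{DHM}. First I would make precise the ``generation'' reading of the infection dynamics linearised at the disease-free demographic state, whose susceptible density is $S(\omega)$ --- for model \eqref{Eq_mod_het} this is just $S(\omega)=f(\omega)$, since the disease-free steady state has $I\equiv 0$. If a cohort of individuals has just become infected with h-state density $\phi(\omega)$, one of them with h-state $\omega$ and infection-age $\tau$ produces new infections among type-$\zeta$ susceptibles at rate $A(\tau,\zeta,\omega)S(\zeta)$; integrating over the whole infectious period gives the kernel
\[
k(\zeta,\omega)=S(\zeta)\int_0^\infty A(\tau,\zeta,\omega)\dd\tau=a(\zeta)\,b(\omega)\,S(\zeta),
\]
so that the next generation has density $(K\phi)(\zeta)=\int_\Omega k(\zeta,\omega)\phi(\omega)\dd\omega=a(\zeta)S(\zeta)\int_\Omega b(\omega)\phi(\omega)\dd\omega$. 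Boundedness of $\beta,\gamma,\eta$ and positivity of $q$ from Section~\ref{HetMod} make $K$ a well-defined positive bounded operator on $L^1(\Omega,\mu)$, and of finite rank, hence compact. By definition $R_0$ is the spectral radius of $K$, equivalently the asymptotic per-generation multiplication factor, the ``typical'' infective being the one distributed according to the dominant eigenfunction.

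The key observation is that the product hypothesis $\int_0^\infty A(\tau,\zeta,\omega)\dd\tau=a(\zeta)b(\omega)$ forces $K$ to be rank one: its range is spanned by the single function $\psi(\zeta):=a(\zeta)S(\zeta)$, and $K\phi=\langle b,\phi\rangle\,\psi$ with $\langle b,\phi\rangle:=\int_\Omega b(\omega)\phi(\omega)\dd\omega$. For such an operator the only possible nonzero eigenvalue is $\langle b,\psi\rangle$, since $K\psi=\langle b,\psi\rangle\,\psi$ while $K$ annihilates the hyperplane $\{\phi:\langle b,\phi\rangle=0\}$; hence the spectrum is $\{0,\langle b,\psi\rangle\}$, and as $\langle b,\psi\rangle=\int_\Omega b(\omega)a(\omega)S(\omega)\dd\omega\geq 0$ this value is the spectral radius. (That $\psi$ is the nonnegative ``typical-infective'' profile also follows from the Krein--Rutman theorem, applicable by compactness and positivity of $K$, but for a rank-one operator this is elementary.) Therefore
\[
R_0=\int_\Omega a(\omega)\,b(\omega)\,S(\omega)\dd\omega,
\]
which is the claimed formula.

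The step I expect to need the most care is not this computation but the bookkeeping that links the abstract renewal formulation to model \eqref{Eq_mod_het}: one must verify that linearising \eqref{Eq_mod_het} at the disease-free state indeed yields a generation operator with kernel $S(\zeta)\int_0^\infty A\,\dd\tau$ and that $A$ has the assumed product form. Concretely, in this SIS setting an individual infected with h-state $\omega$ is still infectious after time $\tau$ with probability $\txte^{-\gamma(\omega)\tau}$ and then enters $J(t)$ with weight $q(\omega)$, while a type-$\zeta$ individual has susceptibility $\rho(\zeta)q(\zeta)$, so $A(\tau,\zeta,\omega)=\rho(\zeta)q(\zeta)\,q(\omega)\txte^{-\gamma(\omega)\tau}$ and thus $a(\zeta)=\rho(\zeta)q(\zeta)$, $b(\omega)=q(\omega)/\gamma(\omega)$. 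With $S(\omega)=f(\omega)$ this gives
\[
\int_\Omega a(\omega)b(\omega)S(\omega)\dd\omega=\int_\Omega \rho(\omega)q(\omega)\frac{q(\omega)}{\gamma(\omega)}f(\omega)\dd\omega=\int_\Omega q(\omega)f(\omega)\frac{\beta(\omega)}{\gamma(\omega)}\dd\omega,
\]
exactly the threshold quantity \eqref{Eq_R0} found analytically in Theorem~\ref{Th_stab}; so the algebraic threshold and the epidemiological $R_0$ coincide.
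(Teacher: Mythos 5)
Your proof is correct. The paper does not actually prove Proposition~\ref{BRN} --- it imports it from \cite{DHM} --- but your rank-one next-generation-operator argument (kernel $a(\zeta)S(\zeta)b(\omega)$, spectral radius $\langle b,aS\rangle$) is precisely the standard derivation in that reference, and your concluding identification $A(\tau,\zeta,\omega)=\beta(\zeta)q(\omega)\txte^{-\gamma(\omega)\tau}$, hence $a(\zeta)=\beta(\zeta)$ and $b(\omega)=q(\omega)/\gamma(\omega)$ with $S=f$, reproduces exactly the computation the paper carries out immediately after the proposition to recover \eqref{Eq_R0}.
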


\bino
In our case the function $f(\omega)$ describes a steady state, provided that there are no 
infected individuals. The value $\beta(\omega)$ denotes the strength of infection for an individual 
with h-state $\omega$. The value $q(\omega)$ indicates the number of infectious contacts an 
infected individual with h-state $\omega$ has. On the other hand $\beta(\zeta)=\rho(\zeta)q(\zeta)$ 
is the average amount of risky contacts that would lead to an infection that an individual with 
h-state $\zeta$ has. The chance of an infectious contact between the infective $\omega$ individual 
and a specific $\zeta$ individual is therefore given by 
$q(\omega)\frac{\beta(\zeta)}{\int_\Omega q(\xi)f(\xi)\ddd\xi}=q(\omega)\beta(\zeta)$. In the 
absence of susceptible individuals the equation for the infected is given by $\dot{I}(t)=
-\gamma(\omega) I(t)$, which suggests that the probability that an infected individual is 
still infected at time $t$ is given by $\txte^{-\gamma(\omega) t}$. Since the infectivity of an 
individual is in our case independent of how long ago the individual was infected, we can 
conclude that the expected infectivity  $A(\tau,\zeta,\omega)$ is given by 
$q(\omega)\beta(\zeta)\txte^{-\gamma(\omega)\tau}$. Since 
\ba
\int\limits_{0}^{\infty}A(\tau,\zeta,\omega)\dd\tau=\int\limits_{0}^{\infty}q(\omega)\beta(\zeta)
\txte^{-\gamma(\omega)\tau}\dd\tau=\beta(\zeta)\frac{q(\omega)}{\gamma(\omega)},
\ea
we can use Proposition \ref{BRN} and get
\ba
R_0=\int_\Omega \beta(\omega)\frac{q(\omega)}{\gamma(\omega)}f(\omega)\dd\omega.
\ea
This is exactly the basic reproduction number as defined in Theorem \ref{Th_stab}.

\section{Extending the Model} 
\label{Extend}

Although the heterogeneous population SIS model \eqref{Eq_I} does capture additional realistic 
features of populations, there are several effects, which it cannot account for at all, or does
not account for very well. In particular, finite-size effects and small fluctuations are not included.
Furthermore, most realistic heterogeneous parameter distributions, e.g.\ the transmission rate, are 
not fixed in time but could be considered as additional dynamical variables. In this section, we 
extend the model \eqref{Eq_I} to include these effects.

\subsection{Noise}

In Section \ref{HomMod} we introduced several interpretations of the parameter $\eta$. Although it is modeled as a deterministic influence on the disease, the effect $\eta$ is supposed to describe on the other hand is seemingly of a 
random nature. Furthermore, even in a situation where we don't want to model any of these 
effects (i.e\ we set $\eta=0$) we can still expect there to be some random deviations from the transmission of the 
disease as predicted by the deterministic model. In fact, 
the validity of deterministic epidemiological models is usually argued by viewing them as the 
average transmission and recovery rate of individual random contacts in a sufficiently large 
population. It is therefore justified to expect to see some remaining randomness in the actual 
progression of the disease \cite{B,HK,LA,N1}.

\bino
We therefore want to model these random effects by exchanging the term 
containing $\eta$ with a term containing a stochastic process. A natural starting point 
for the case when the functional form and properties of the stochastic process are not known 
is to consider white noise $\xi=\xi(t)$ with mean zero $\E[\xi(t)]=0$ and $\delta$-correlation
$\E[\xi(t)-\xi(s)]=\delta(t-s)$, i.e.\ $\xi$ is a generalized stochastic process, so-called white noise, 
as discussed in \cite{ArnoldSDE}. We also want to consider the case when the noise depends upon the 
heterogenity and write $\xi=\xi(t,\omega)$ with the caveat that $\omega\in\Omega$ still denotes the 
variable measuring the heterogeneity distribution, while we suppress the underlying probability space 
for the stochastic process $\xi$ in the notation.\medskip

Putting $\xi(t,\omega)$ into equation (\ref{Eq_mod_het}) yields
\ban\label{Eq_mod_stoch}
	\dot{S}(t,\omega)&=-\beta(\omega)\frac{J(t)}{T(t)+J(t)}S(t,\omega)+\gamma(\omega) 
	I(t,\omega)-\sigma(\omega)\xi(t,\omega),\quad S(0,\omega)=S_0(\omega)\\
	\dot{I}(t,\omega)&= \beta(\omega)\frac{J(t)}{T(t)+J(t)}S(t,\omega)-\gamma(\omega) 
	I(t,\omega)+\sigma(\omega)\xi(t,\omega),\quad I(0,\omega)=I_0(\omega).
\ean
The function $\sigma(\omega):\Omega\ra [0,+\infty)$ is assumed to be bounded and basically provides 
the noise level for a specific $\omega$. Note 
that for every $\omega$, the sum $S(t,\omega)+I(t,\omega)$ is still constant. We can therefore 
again describe the system (\ref{Eq_mod_stoch}) by the smaller system
\ban\label{Eq_Istoch_add}
\dot{I}(t,\omega)&=\beta(\omega)J(t)f(\omega)-(\beta(\omega)J(t)+\gamma(\omega))I(t,\omega)
+\sigma(\omega)\xi(t,\omega),\\
J(t)&=\int_\Omega q(\omega)I(t,\omega)\dd\omega.
\ean
One problem with using an additive noise term is that $I(t,\omega)$ always has to be positive. 
But in this model it would be possible for $I(t,\omega)$ to become negative. To disallow this 
we will use
\ba
\dot{I}(t,\omega)=\max\{0,\beta(\omega)J(t)f(\omega)+\sigma(\omega)\xi(t,\omega)\}
\quad\text{if}\quad I(t,\omega)=0.
\ea
Similarly, since $I(t,\omega)$ has to be smaller than $f(\omega)$, we use
\ba
\dot{I}(t,\omega)=\min\{0,-\gamma(\omega)f(\omega)+\sigma(\omega)\xi(t,\omega)\}
\quad\text{if}\quad I(t,\omega)=f(\omega).
\ea
In the following considerations we restrict ourselves to models using additive noise. 
However, we want to indicate another commonly encountered modelling possibility, which is using 
a multiplicative noise term instead of an additive one. That is, to use
\ban\label{Eq_Istoch_mult}
\dot{I}(t,\omega)&=\beta(\omega)J(t)f(\omega)-(\beta(\omega)J(t)+\gamma(\omega))I(t,\omega)
+g(I(t,\omega),\omega)\xi(t,\omega),\\
J(t)&=\int_\Omega q(\omega)I(t,\omega)\dd\omega,
\ean
where $g:\R\times \Omega\ra [0,+\infty)$ is bounded. Imposing the conditions $g(0,\omega)=0$ and 
$g(1,\omega)=0$ can now ensure that it is never possible 
for $I(t,\omega)$ to become negative or larger than $f(\omega)$. Usually, one also assumes
that $g(\cdot,\omega)$ does not vanish between zero and one.\medskip

Which of these two options is chosen will depend on what kind of random influences are to be 
considered. If the random fluctuations are meant to offset fluctuations in the transmission 
and recovery of the infection, then the multiplicative noise term might be more appropriate. 
First of all, if no infected individuals are present then the disease does not spread at all, 
which is captured by this model. Also, if nearly no one (or nearly everyone) is infected then 
the inaccuracies of the deterministic model should be small, so the noise term should also 
be small. Again, the multiplicative noise exhibits this behaviour.

The model with additive noise, which we will use in the following, is however not without merit. 
It allows us to model a population that has contact with an outside source that can import the 
disease into the population. This source can be, as mentioned above, another population which 
imports the disease. Alternatively, there might be factors in the environment that import the 
infection. For a population of animals it could for example model the possibility to become
infected through one of its food sources. Also for human populations this allows us to assume 
that there are vermin or insects in their environment, which are carriers of the disease and 
are able to transmit it to humans. In these situations there is a chance to become infected 
even in a population that consists entirely of susceptible individuals, which is not captured 
in models using multiplicative noise. 

Also, it should be noted that if both effects are present, i.e.\ internal fluctuations as
well as external fluctuations, and we assume that both noise terms act as summands in the model,
then the noise term is
\be
\label{eq:nterm}
[\sigma(\omega)+g(I(t,\omega),\omega)]\xi(t,\omega).
\ee
Near the two states $I(t,\omega)\equiv 0$ and $I(t,\omega)\equiv f(\omega)$, we have that
$g(I(t,\omega),\omega)$ is a higher-order term in comparison to the constant term as long as
the constant term does not vanish and we are mainly interested in the regimes near the 
the two states $I(t,\omega)\equiv 0$ and $I(t,\omega)\equiv f(\omega)$ in the remaining part
of this work. Also note that a multiplicative noise term $g(I(t,\omega),\omega)\xi(t,\omega)$ with $g(0,\omega)>0$ can always be written as
\ba
g(I(t,\omega),\omega)\xi(t,\omega)= \left[g(0,\omega)+\big(g(I(t,\omega),\omega)-g(0,\omega)\big)\right]\xi(t,\omega),
\ea
which is near $I(t,\omega)=0$ again the sum of an additive noise term and a term of higher order. Based on these arguments, we proceed with additive noise
but it could definitely be interesting to investigate the purely multiplicative noise in future work.

\subsection{Multiple time scales}

As a final extension of our model we now introduce a slow variable into the system. Making 
certain model parameters slow dynamic variables is a very natural extension used in virtually
all areas of research in mathematical biology \cite{H1,KuehnBook}. The main reason is that it is
usually not correct to assume that all system parameters are fixed but most system parameters
are going to change slowly over time, so a parametric model should rather be viewed as a partially
frozen state for a model with multiple time scales.\medskip

In the context of epidemiology, many diseases have seasonal cycles or are latent for a longer 
period before it comes to an outbreak. In both cases we assume that the basic reproduction 
number $R_0$ was smaller than $1$ until some time, which means the stable steady state of 
the deterministic system is $0$, and bigger than $1$ afterwards, which 
means that a stable endemic steady state exists. In order to capture this in our model we assume 
that $\beta(\omega)$ slowly changes over time. In fact, there are many different possibilities
that may lead to a slowly changing transmission rate, including seasonal changes, evolutionary
processes, socio-economic influences, and so on. Furthermore, if we would keep the transmission
rate fixed as a parameter, then we would either observe a disease-free state or an endemic state
in the SIS model but not the transition between the two cases. It is precisely the \emph{dynamic}
transition regime which we are interested in.\medskip

We assume that the time dependence of the
function $\beta(t,\omega)$ is such that $R_0$ is increasing in $t$. For example, assume that 
$\beta(t,\omega)$ is separable, i.e.\ there exists a function $\beta_0(t)$ such that 
$\beta(t,\omega)=\beta_0(t)\beta(\omega)$, and that this function $\beta_0(t)$ evolves according 
to the equation $\dot{\beta_0}(t)=\varepsilon$ for $0<\varepsilon\ll 1$. In this case we would have
\ba
R_0(t)=\int_\Omega q(\omega)f(\omega)\frac{\beta_0(t)\beta(\omega)}{\gamma(\omega)}\dd\omega
=\beta_0(t)\int_\Omega q(\omega)f(\omega)\frac{\beta(\omega)}{\gamma(\omega)}\dd\omega.
\ea
Thus, $R_0(t)$ is strictly increasing and, if $\beta_0(0)$ is small enough, $R_0(0)<1$. 
This is exactly the situation we want to capture. This effect can of course also be achieved 
with a $\beta(t,\omega)$ which is not factorisable. We are therefore looking at the system
\ban\label{Eq_sys_add}
\dot{I}(t,\omega)&=\beta(t,\omega)J(t)(f(\omega)-I(t,\omega))-\gamma(\omega)
I(t,\omega)+\sigma(\omega)\xi(t,\omega),\\
\dot{\beta}(t,\omega)&=\varepsilon h(t,\omega),\\
J(t)&=\int_\Omega q(\omega)I(t,\omega)\dd\omega,
\ean
with an appropriate function $h(t,\omega)$.

\subsection{Warning-Signs for the Homogeneous Fast-Slow Stochastic Model}
\label{sec:warning}

In this section, we briefly recall some techniques for fast-slow systems and warning signs for
stochastic fast-slow systems. For reviewing this material, we consider a simple homogeneous 
version of \eqref{Eq_sys_add} to simplify the exposition
\be
\label{eq:fs}
\begin{array}{lcl}
 \dot{I}(t) &=& \beta(t) I(t)(1-I(t))-\gamma I(t)+\sigma \xi(t),\\
 \dot{\beta}(t) &=& \varepsilon, 
\end{array}
\ee
where $I=I(t)$ is the fast variable and $\beta=\beta(t)$ the slow variable. For $\sigma=0$, 
$\varepsilon=0$, the set $\cC_0=\{(I,\beta)\in[0,+\infty)\times[0,+\infty):I(\beta-\gamma-\beta I)=0\}$ 
is called the critical manifold \cite{Jones} and consists of steady states for the fast subsystem, 
which is obtained by setting $\varepsilon=0$ in \eqref{eq:fs}. The transcritical bifurcation discussed in 
Section \ref{HomMod} separates $\cC_0$ into three parts in the positive quadrant
\benn
\cC_0^a=\cC_0\cap \{R_0\leq1\},\quad \cC_0^r=\cC_0\cap \{R_0>1,I=0\},\quad \cC_0^e=\cC_0\cap \{I>0\}.  
\eenn
Then $\cC_0^a$ and $\cC_0^e$ consist of attracting steady states for 
the fast subsystem, while $\cC_0^r$ is repelling. The stability is exchanged at the transcritical
bifurcation point with $\beta=\gamma$, i.e.\ at $R_0=1$. The deterministic fast-slow systems 
analysis of the dynamic transcritical bifurcation with $0<\varepsilon\ll1$ can be found in 
\cite{KruSzm4,Schecter}, where one key point is that one can extend a perturbation $\cC_\varepsilon^a$,
a so-called attracting slow manifold, of $\cC_0^a$ up to a region of size $I\sim \cO(\varepsilon^{1/2})$
and $\beta-\gamma \sim \cO(\varepsilon^{1/2})$ as $\varepsilon\rightarrow 0$ near the transcritical
bifurcation point. The relevant conclusion for us here is that a linearisation analysis is expected
to be valid up to this region, excluding a small ball of size $\cO(\varepsilon^{1/2})$. 

It can be shown that sample paths of the stochastic system with $0<\varepsilon\ll1$, $0<\sigma\ll1$
also track with high-probability the attracting manifold inside a neighbourhood of order 
$\cO(\varepsilon)$ plus a probabilistic correction term \cite{BerglundGentz1}. However, as the 
transcritical bifurcation point $(I,\beta)=(0,\gamma)$ is slowly approached from below 
$\beta\nearrow \gamma$, the probabilistic correction term starts to grow. Indeed, there is a 
simple intuitive explanation for this behaviour due to an effect also called ``critical slowing 
down". To understand this effect, Taylor expand the drift and diffusion terms of the $I$-component
of the stochastic differential equation \eqref{eq:fs} around $\cC_0^a$ and keep the linear terms, 
which yields
\be
\label{eq:fs1}
 \dot{\cI}(t) =[\beta -\gamma] \cI(t)+\sigma \xi(t),
\ee
where we view $\beta$ as a parameter for now and use $\cI$ to emphasize that we work on the
level of the linearization. Then \eqref{eq:fs1} is just an Ornstein-Uhlenbeck (OU)
process \cite{Gardiner}. Consider the regime $\beta\leq \gamma$, then the variance of the OU process
increases if $\beta$ increases and it is an explicit calculation \cite{KuehnCT2} to see that
\be
\lim_{t\ra +\infty}\text{Var}(\cI(t))\sim\frac{\sigma^2}{\gamma-\beta}\qquad 
\text{as $\beta\nearrow \gamma$},
\ee
so the variance increases rapidly as we start to approach the bifurcation point by changing the parameter
$\beta$ more towards $\gamma$. This makes sense intuitively as the deterministic stabilizing effect
from the drift term $[\beta -\gamma] \cI(t)$ pushing towards a region near $\cC_0^a$ is diminished
(``critical slowing down") and hence the noisy fluctuations increase. It is known that the effect of 
critical slowing-down in combination with noise can be exploited to predict bifurcation points in certain 
situations (see e.g.\ the ground-breaking work \cite{Wiesenfeld1}). The idea has been also suggested
in the context of ecology \cite{CarpenterBrock} and then applied in many other circumstances 
\cite{Schefferetal}. In fact, one may prove that we indeed have for the full nonlinear stochastic 
fast-slow system \eqref{eq:fs}, under suitable smallness assumptions on a fixed noise level and staying 
$\cO(\varepsilon^{1/2})$ away from the region of the deterministic bifurcation point, that
\be
\label{eq:slaw}
\text{Var}(I(t))\sim \frac{A}{(t_{crit}-t)^\alpha}+\text{higher-order terms},\quad \text{as $t\nearrow t_c$},
\ee
where $\alpha=1$, $A=\sigma^2$, $\beta(t_{crit})=\gamma$ with $\beta(0)<t_{crit}$; the details can be found in
\cite{KuehnCT2} using moment expansion methods, and in \cite{BerglundGentz} using martingale methods and/or
explicit OU-process results. The main practical conclusion is that there is a leading-order scaling law 
of the variance as the value of $R_0$ is approached by letting the transmission rate slowly drift in time.
This scaling law can be used for prediction as the scaling exponent $\alpha=1$ is universal for a 
non-degenerate transcritical bifurcation. In fact, a calculation of the leading-order
covariance scaling laws for all bifurcations up to codimension-two in stochastic fast-slow systems
has been carried out \cite{KuehnCT2}, which builds a mathematical framework for generic systems.

However, this theory simply does not apply to the heterogeneous population model \eqref{Eq_sys_add}
we consider here. In particular, the influence of heterogeneity on early-warning signs has not been
investigated much to the best of our knowledge (for examples see \cite{KMC,Schefferetal}). Since it is a key effect in realistic models of
disease spreading, it is natural to ask, how it influences the scaling law \eqref{eq:slaw}.

\section{Numerical results} 
\label{NumRes}

Here we present numerical simulations of the homogeneous and heterogeneous system to see the influence 
of the heterogeneity. We have chosen to consider the variance as the early warning sign. We assume 
that the variance behaves like $\frac{A}{\left(t_{crit}-t\right)^\alpha}$ for appropriate $A$ and 
$\alpha$, where $t_{crit}$ is the time at which $R_0(t)=1$. The main difficulty lies in correctly 
determining $\alpha$. We will calculate it by fitting the reference curve 
$\frac{A}{\left(t_{crit}-t\right)^\alpha}$ to the time series of our simulation using 
the least squares method. To smoothen the time series we will average the variance over $100$ 
simulations. However, since the reference curve goes to infinity at $t_{crit}$ we do not fit the 
curve over the whole interval, which also takes into account the theory, which excludes a small 
$\varepsilon$-dependent ball near $R_0(t)=1$ as discussed in Section \ref{sec:warning}. We therefore calculate the best fit over $80\%$ or $90\%$ of the 
considered time interval. Generally, fitting over $90\%$ gives better results. In the cases we consider only $80\%$ of the interval, fitting over a larger part would not yield reasonable results as the solution goes to $-\infty$. These are cases in which the sample path drops below the negative unstable branch of the transcritical bifurcation (see Figure \ref{figure_9}). Since 
different choices in the size of the considered interval lead to slightly different values for $\alpha$ we cannot claim to calculate 
the exact $\alpha$ that the variance of $I(t)$ follows. We will however be able to detect changes 
in the level of $\alpha$ that are due to influences of the heterogeneity.\medskip

One further aspect we fix for all our considerations is the order in which we aggregate 
the system and calculate the variance. We could calculate the variance of $I(t,\omega)$ and then 
aggregate these variances, or first calculate $I(t)=\int_\Omega I(t,\omega)\dd\omega$ and calculate 
the variance of $I(t)$. We choose the latter option since in applications it is more feasible to 
be able to track the changes of the prevalence of the disease in the whole population rather 
than being able to track it for each h-state, as would be required by the first method.\medskip

In this section, we shall only consider the numerical simulations make observations about the
results. A more detailed discussion \emph{why} certain effects may occur is then given in
Section \ref{Expl}.

\bino
First we consider the homogeneous system with additive noise and a very simple multiplicative 
time dependency of $\beta$:
\ba
	\dot{I}(t) &= \beta~\beta_0(t)(1-I(t))I(t) - \gamma I(t)+\sigma \xi(t),\\
	\dot{\beta_0}(t)&=\varepsilon.
\ea
As initial conditions we choose $I(0)=0$ and $\beta_0(0)=0$. The parameters are chosen as 
$\beta=0.3$, $\gamma=0.4$, and $\sigma= 0.01$. The time scale separation parameter $\varepsilon$ 
for the slow variable drift is set to $\varepsilon=0.0001$. If we allow $I(t)$ to become negative 
then we know that 
the variance of $I(t)$ should behave as $\frac{A}{t_{crit}-t}$. In Figure \ref{figure_1} we 
show the variance of $I(t)$, averaged over $100$ simulations, and the reference curve with both the
theoretical exponent $\alpha=1$ and with the exponent provided by the best fit over $80\%$ of 
the time interval. The measured exponent is reasonably close to the theoretically predicted
value $\alpha=1$; this slight underestimate is expected as a transcritical bifurcation splits 
into two saddle bifurcations upon generic perturbations and for saddle-nodes the exponent 
is $\alpha=\frac12$; see also \cite{KuehnCT2} for more details, which exponents may occur
in the generic cases.\medskip

\begin{figure}
\centering
\includegraphics[width=\textwidth]{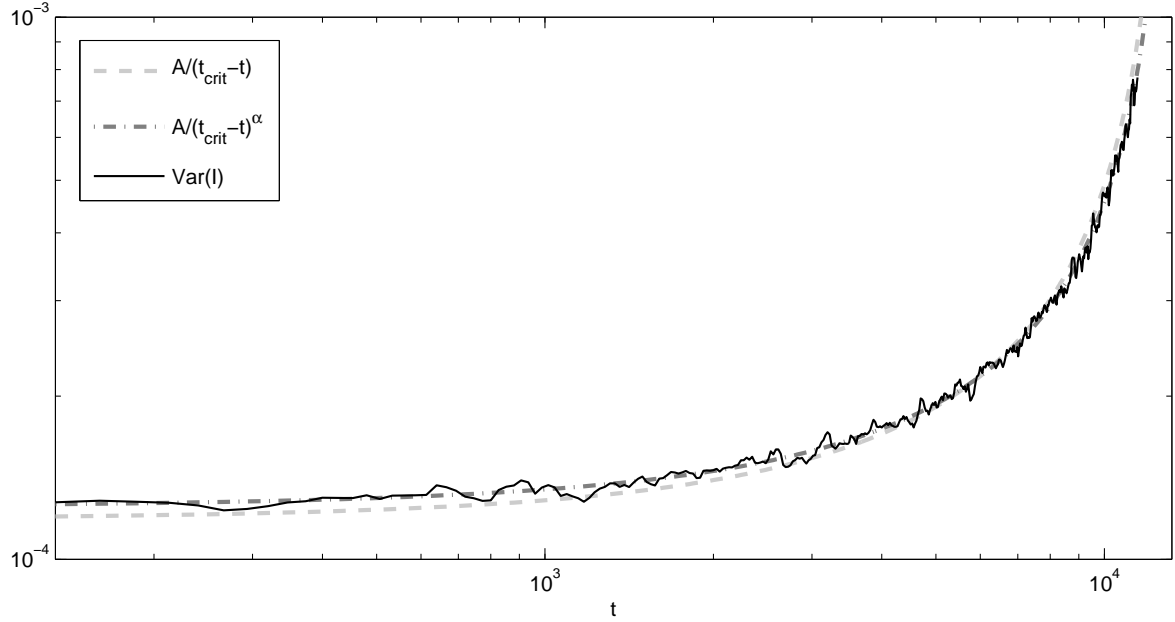}
\caption{Here we see the variance of the aggregated variable $I(t)$. As a reference we show the curve $A/(t_{crit}-t)^\alpha$ with both the expected theoretical exponent $\alpha = 1$ and with the exponent $\alpha = 0.9125$ provided by the best fit over $80\%$ of the considered time interval.}\label{figure_1}
\end{figure}


\begin{figure}
\centering
\includegraphics[width=\textwidth]{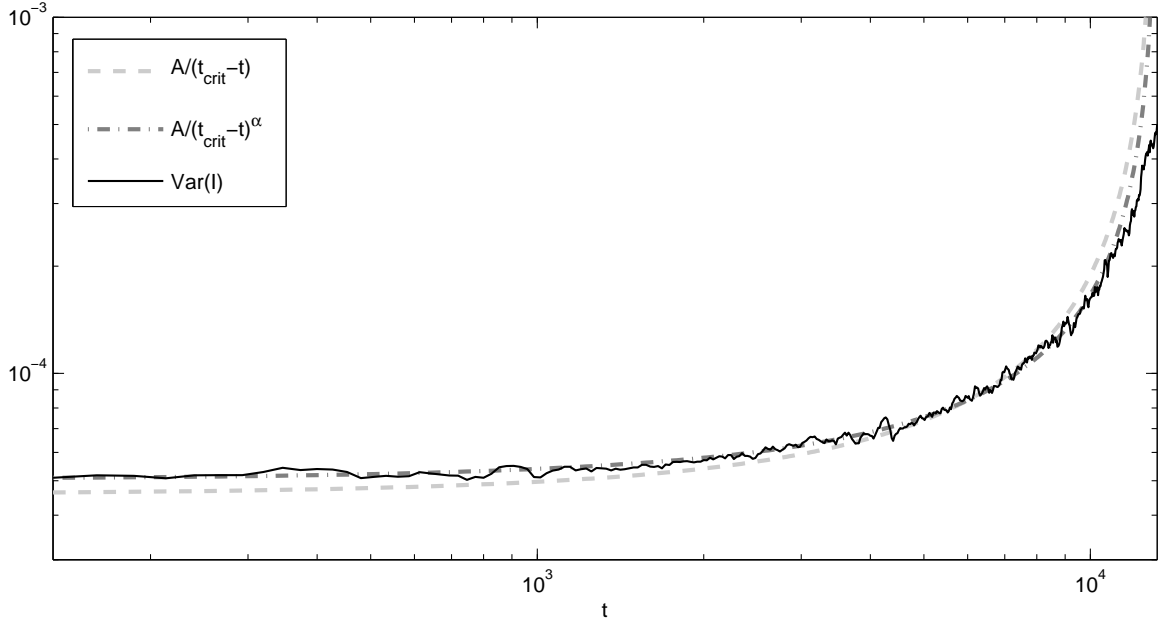}
\caption{Here we see the variance of the aggregated variable $I(t)$. As a reference we show the curve $A/(t_{crit}-t)^\alpha$ with both the expected theoretical exponent $\alpha = 1$ and with the exponent $\alpha = 0.8414$ provided by the best fit over $90\%$ of the considered time interval. Note that as we approach the critical moment the curve for the variance is noticeable below the curve with $\alpha=1$.}\label{figure_2}
\end{figure}

Figure \ref{figure_2} shows the result of this calculations if we cut off $I(t)$ at $0$, i.e.\ we 
use the rule
\ba
\dot{I}(t)=\max\{\sigma\xi(t),0\},\quad\text{if}\quad I(t)=0
\ea
for the discrete-time numerical scheme; for an introduction to numerical schemes for stochastic 
ordinary differential equations see \cite{Higham}. The results show that the key exponent 
$\alpha$ decreases in comparison to the system without cut-off.\medskip 

Next, we consider the heterogeneous system. We are going to consider situations in which the 
white noises $\xi(t,\omega)$ are dependent on each other for different $\omega\in\Omega$, or where 
the space of h-states is discrete to understand, which implications these assumptions have on the
model. Note that both assumptions have a direct modelling motivation. Usually, we may group or
cluster different parts of a heterogeneous population into different classes, e.g.\ all parts
with a different trait. Secondly, $\xi(t,\omega)$ models all stochastic internal and external 
effects and one natural assumption would be that all classes of the heterogeneous population
are subject to the same external fluctuations, which would lead to the case $\xi(t,\omega)=\xi(t)$, 
i.e.\ the same white noise acts on all h-states. Note that for the aggregated variable $I(t)$ we have
\ba
\dot{I}(t)&=\int_\Omega\beta(t,\omega)J(t)(f(\omega)-I(t,\omega))-\gamma(\omega)I(t,\omega)\dd\omega+\int_\Omega\sigma(\omega)\xi(t,\omega)\dd\omega.
\ea
If $\Omega$ is continuous and the $\xi(t,\omega)$ are independent of each other, then $\int_\Omega\sigma(\omega)\xi(t,\omega)\dd\omega=0$ and the influence of the noise is reduced to indirect effects. We therefore consider either continuous $\Omega$ with dependent $\xi(t,\omega)$ or a discrete $\Omega$ with independent $\xi(t,\omega)$.\medskip

\begin{figure}
\centering
\includegraphics[width=\textwidth]{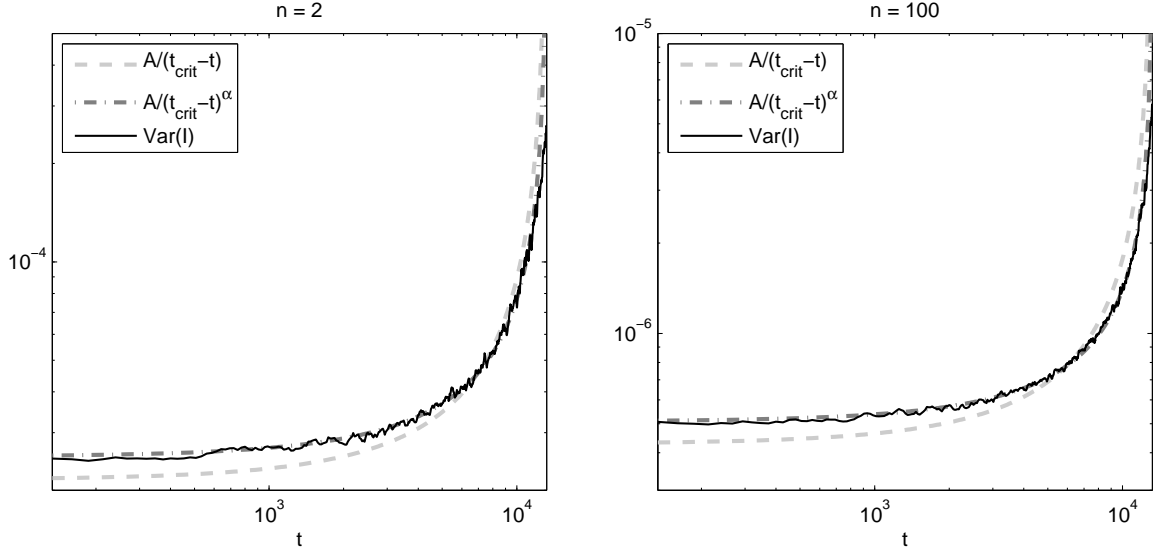}
\caption{The results for the discrete heterogeneous system for two different values of $n$. For $n=2$ the best fit results in $A=0.0432$ and $\alpha=0.7842$, for $n=100$ in $A=0.0004$ and $\alpha=0.7135$. Note that both values decrease for bigger $n$.}\label{figure_3}
\end{figure}

We start with the discrete h-state scenario. For an integer $n>1$ we set 
\benn
\Omega=\left\{\frac{i}{n-1} :i=0,\cdots,n-1 \right\}. 
\eenn
As measure $\mu$ we choose the counting measure normed to $1$ over $\Omega$
\benn
\int_\Omega \phi(\omega)\dd\omega=\frac{1}{n}\sum\limits_{i=1}^{n} \phi(\omega_i). 
\eenn
We assume that $\beta(t,\omega)=\beta_0(t)\beta(\omega)$ and $\dot{\beta_0}(t)=\varepsilon.$ As in 
the homogeneous case we choose $I(0)=0$ and $\beta_0(0)=0$ as initial conditions and $\beta=0.3$, 
$\gamma=0.4$, and $\sigma= 0.01$ for the parameters. The time scale separation parameter $\varepsilon$ 
for the slow variable is again set at $\varepsilon=0.0001$. Here the heterogeneity influences the number of elements in $\Omega$ and the distribution $f(\omega)$. Furthermore, $\xi(t,\omega)$ are chosen as $n$ independent identically distributed random variables.
We will both now and for continuous $\Omega$ later consider the distribution
\ba
f(\omega)=\frac{\frac{1}{\sqrt{2\pi}\theta}
\txte^{-\frac{(\omega-0.5)^2}{2\theta^2}}}{\displaystyle
\int\limits_{\Omega}\frac{1}{\sqrt{2\pi}\theta}\txte^{-\frac{(\zeta-0.5)^2}{2\theta^2}}\dd\zeta}.
\ea
This is simply a normal distribution with mean $0.5$ truncated to $\Omega$. Figure \ref{figure_5} 
shows $f(\omega)$ for different values of $p$. The parameter $\theta$ is the standard deviation of this
 distribution. Note that as $\theta$ goes towards 
$0$, the function $f(\omega)$ converges to the delta-distribution $\delta(\omega-0.5)$. Hence, the 
the heterogeneous system starts to approximate the homogeneous one as $\theta\rightarrow0$. On 
the other hand, if $\theta\rightarrow +\infty$ then $f(\omega)$ converges to the constant function 
$f(\omega)=1$. We will therefore parametrise $f(\omega)$ with 
$\theta=\frac{1}{(2p-2)^2}-\frac{1}{4}$ for $p\in(0,1)$. In the discrete case which we consider 
first, this yields approximately a binomial-type distribution. In Figure \ref{figure_3} we show 
the result for $p=0.5$ and two different choices of $n$. Figure \ref{figure_4} shows how both
 $\alpha$ and $A$ in the best fit curve change with increasing $n$. A clear trend
is observed showing that $\alpha$ (and $A$) decrease as $n$ is increased.\medskip 

\begin{figure}
\centering
\includegraphics[width=\textwidth]{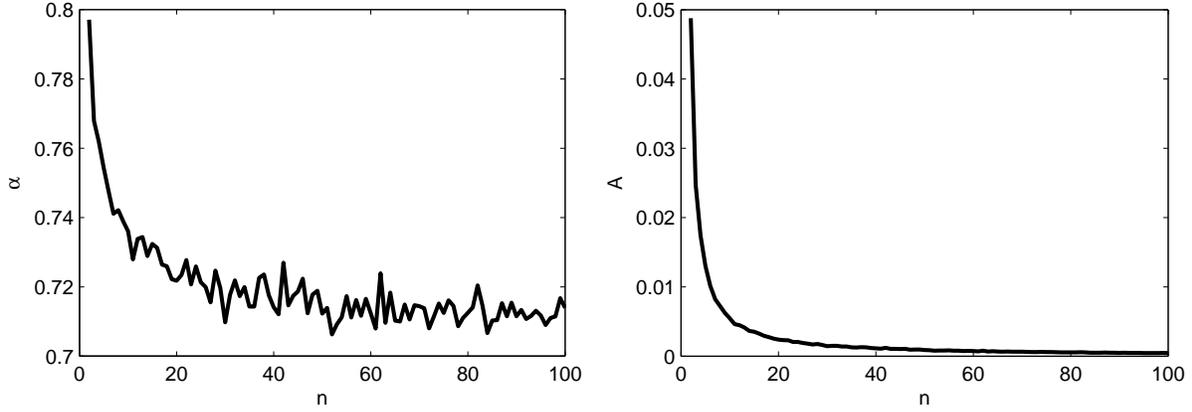}
\caption{We see the influence of $n$ on the on the parameters for the best fit, calculated over $90\%$ of the time interval. Both $\alpha$ and $A$ decrease as $n$ increases. The decrease is steep for small $n$ and approaches a constant level as $n$ becomes large.}\label{figure_4}
\end{figure}

\begin{figure}
\centering
\includegraphics[width=0.6\textwidth]{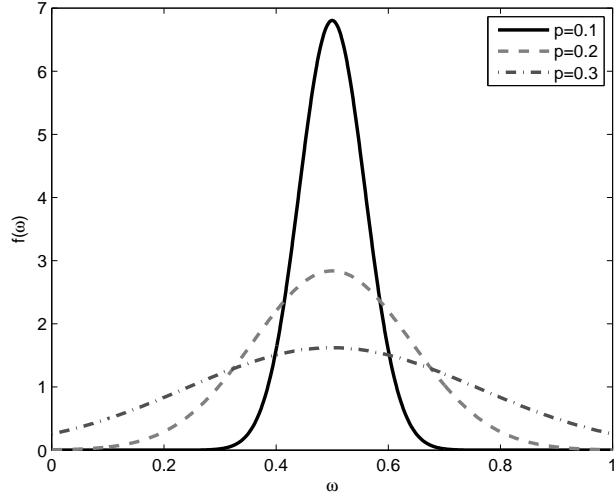}
\caption{Plot of $f(\omega)$ for different values of $p$. For small $p$ the function $f(\omega)$ approaches a $\delta$-Distribution at $0.5$. For larger $p$ the function becomes more flat. Note that for small $p$ the support of $f(\omega)$ increases with $p$.}\label{figure_5}
\end{figure}

\begin{figure}
\centering
\includegraphics[width=\textwidth]{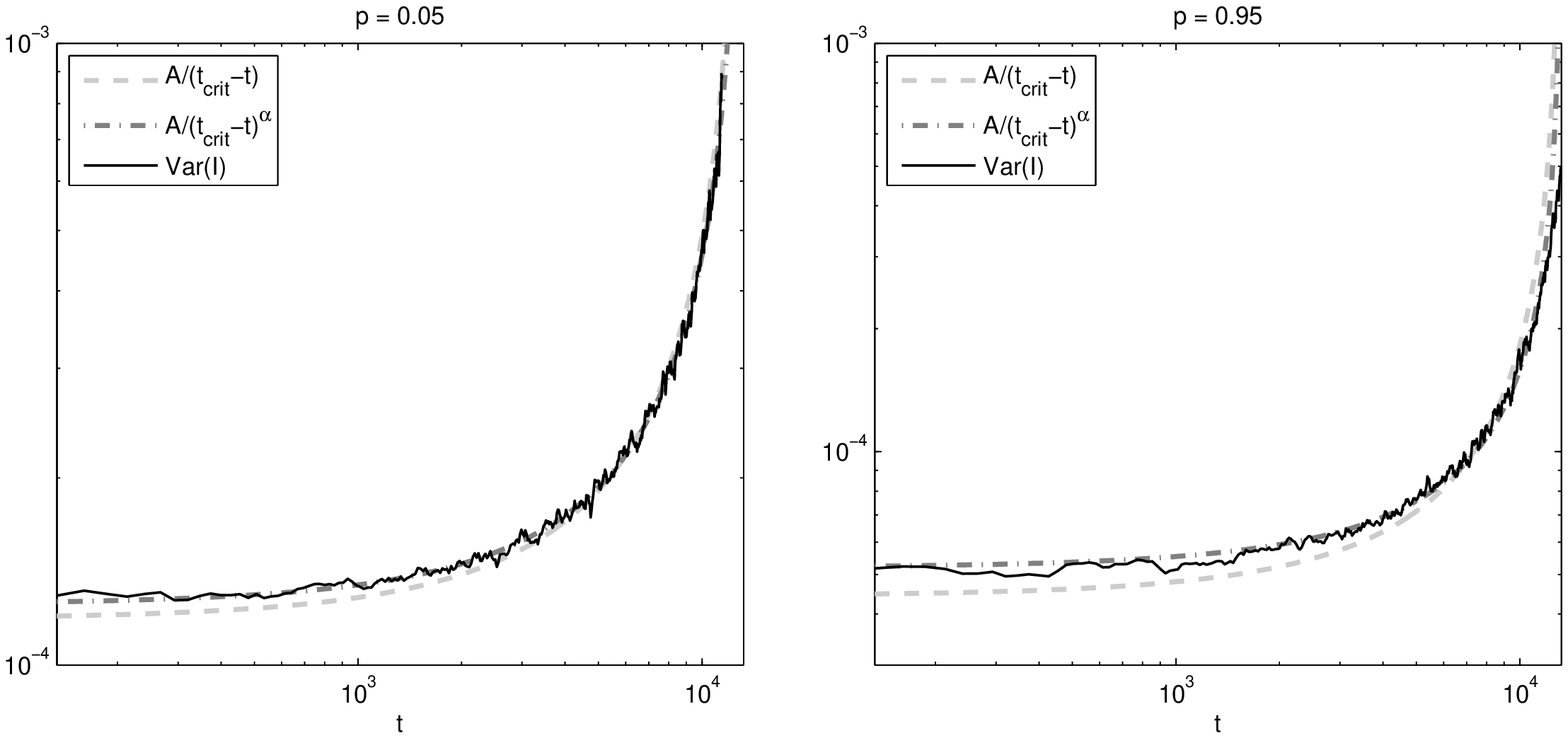}
\caption{Results for the continuous heterogeneous system for two different values of $p$. For $p=0.05$ the best fit over $90\%$ of the time interval was calculated as $\alpha=0.8587$ and $A=0.0074$. For $p=0.95$ these values were $\alpha=0.7885$ and $A=0.0923$. We can see that for $p=0.95$ the variance is visibly below the reference curve with $\alpha=1$ while for $p=0.05$ it still follows this curve quite closely.}\label{figure_6}
\end{figure}

For the heterogeneous system with continuous $\Omega$ we choose $\Omega=[0,1]$ with 
$\mu$ as the Lebesgue measure. At first we again restrict the influence of the heterogeneity 
to the function $f(\omega)$. The choice of the other parameters in unchanged from the discrete 
system. What has to be changed however, is the noise term in the equation. As mentioned above 
we want the noise for different h-states to be dependent on each other. We do this 
by using the first natural approximation of using the same white noise for all h-states, 
i.e.\ $\xi(t,\omega)=\xi(t)$ independent of $\omega$. In Figure \ref{figure_6} we show the 
variance of $I(t)$ against the reference curves for two different values of the parameter $p$. 
In Figure \ref{figure_7} we show, how $p$ influences both $A$ and $\alpha$. The results show
that upon increasing $p$, we first see $\alpha$ increase and $A$ decrease until they stabilize
for larger $p$. We observe that the stabilization approximately happens when the distribution 
$f(\omega)$ starts to have full support on $[0,1]$.\medskip 

\begin{figure}
\centering
\includegraphics[width=\textwidth]{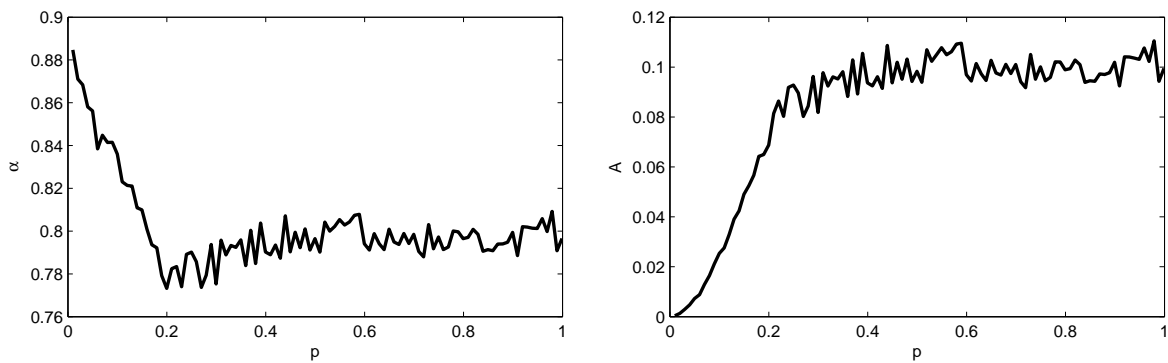}
\caption{This shows the influence of the parameter $p$ on the values $\alpha$ and $A$ of the best fit, calculated 
over $90\%$ of the time interval. In $\alpha$ we see initially a steady decrease until it reaches a constant level. In $A$ we see an initial increase until the values reach a fixed level.  Note that the leveling out both $\alpha$ and $A$ occur for the same values of $p$. Furthermore, by comparing with Figure \ref{figure_5} we see that this coincides with those values of $p$ for which the support of $f(\omega)$ becomes the whole of $\Omega$.}\label{figure_7}
\end{figure}

The last case we are interested in here is to consider a system where $\beta(t,\omega)$ is 
not separable in the sense that it cannot be factored into a product of functions depending
only on $t$ and $\omega$. From the modelling standpoint, this means that the evolution of the 
transmission rate and the heterogeneity in the population interact in a non-trivial way, for 
example, one may consider the situation when a certain population trait amplifies the change in the
transmission rate, while another trait decreases it. As a first benchmark mathematical example,
we simply set
\ba
\dot{\beta}(t,\omega)=\varepsilon (\omega+0.5)t^{\omega-0.5},
\ea
which is solved by $\beta(t,\omega)=\varepsilon t^{\omega+0.5}$. We restrict any further influence 
of $\omega$ to $f(\omega)$. However, we choose $f(\omega)$ slightly differently than before. We 
set
\ba
f(\omega)=\frac{\frac{1}{\sqrt{2\pi}0.1}
\txte^{-\frac{(\omega-\mu)^2}{2*0.1^2}}}{\displaystyle
\int\limits_{0}^1\frac{1}{\sqrt{2\pi}0.1}\txte^{-\frac{(\zeta-\mu)^2}{2*0.1^2}}\dd\zeta},
\ea
i.e.\ a normal distribution with mean $\mu$ and a standard deviation of $0.1$. We let $\mu$ 
vary in $[0,1]$. All other parameters are the same as before. Figure \ref{figure_8} shows, 
how $\mu$ influences $A$ and $\alpha$ as calculated from an aggregation of $100$ simulations 
and fitted over $90\%$ of the time interval. We observe a very strong trend in the crucial 
exponent $\alpha$, which decreases as the mean $\mu$ of $f(\omega)$ is increased.

\begin{figure}
\includegraphics[width=\textwidth]{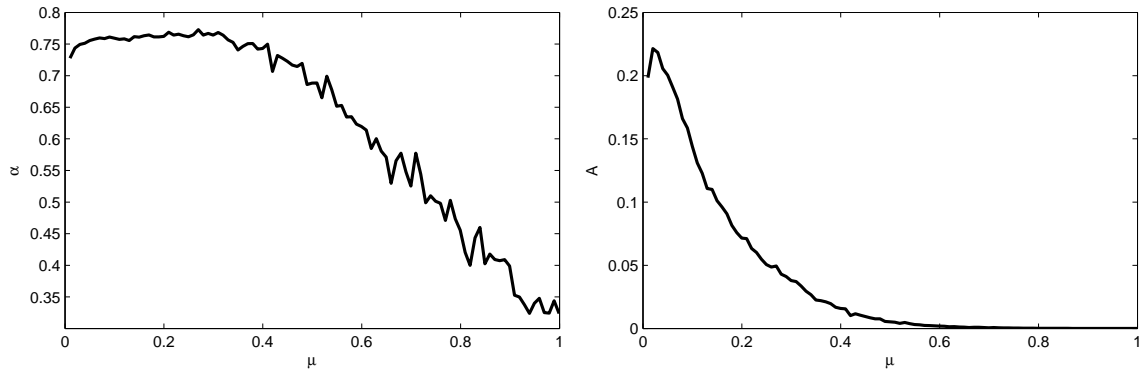}
\caption{We see the influence of the parameter $\mu$ on the values $\alpha$ and $A$ of the best fit, calculated over $90\%$ of the time interval. With increasing $\mu$ both $\alpha$ and $A$ decrease significantly.}\label{figure_8}
\end{figure}

\section{Explanations}
\label{Expl}

In this section we give some explanations, formal or heuristic, for the effect that are observable 
in our simulations

\subsection{Homogeneous system}
\label{Ex_hom}

The first effect we want to explain is the influence of the cut off on the homogeneous system. Since 
the steady state solution $I(t)=0$ is asymptotically stable and the added white noise always has 
an expected value of $0$, in the system without cut off $I(t)$ fluctuates around $0$. Once we introduce 
the cut off $I(t)$ can no longer fluctuate freely. This introduces a bias in the positive direction. 
That is, a sample path $I(t)$ is free to change upwards but we stop it when it changes too far downwards. 
This results in the averaged path being strictly positive (see Figure \ref{figure_9}). Another effect is 
that because we restrict the fluctuations of the white noise we decrease the variance of the resulting
stochastic process $I(t)$. This can be seen by comparing Figures \ref{figure_1} and \ref{figure_2}. 
Finally, in the system without cut off the variance increases at a certain rate. In the system with 
cut off this increase is still present, but we also have a second effect at work. Due to the 
fact that the averaged path also increases, each individual sample path has, as it were, more space 
to fluctuate in, as a downwards deviation from the average path can now be bigger than before 
without hitting $0$. Thus in addition to the usual increase in the variance there is also a 
decrease of the restriction we place on the variance. Therefore, the increase of the variance is 
steeper in the system with cut off. This steeper increase is translated into a decrease of $\alpha$.

\begin{figure}
\includegraphics[width=\textwidth]{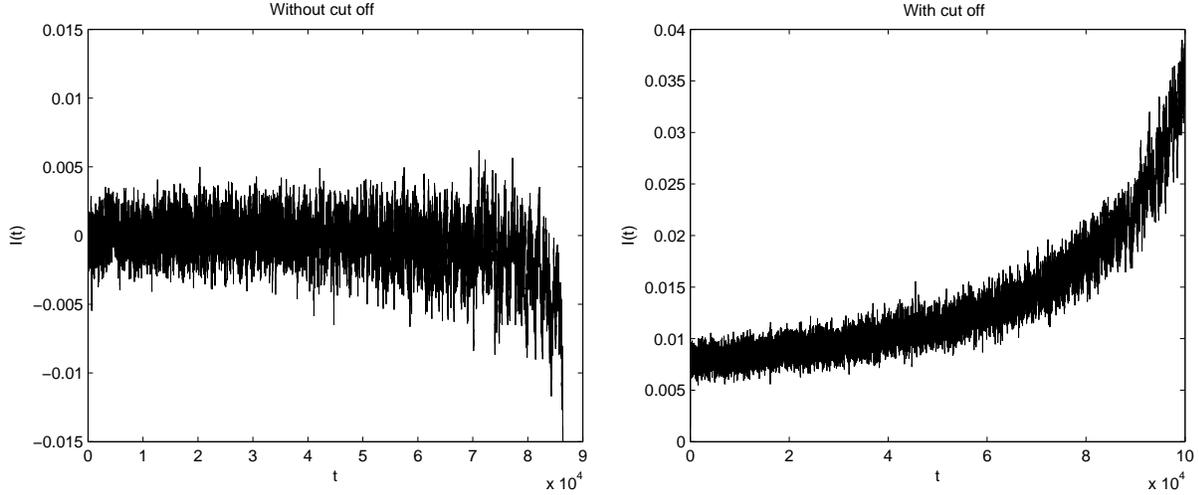}
\caption{Averaged path of the homogeneous system, averaged over 100 simulation, both with and without cut off. The path without cut off eventually tends towards $-\infty$ as it drops below the unstable branch of the transcritical bifurcation.}\label{figure_9}
\end{figure}

\subsection{Discrete heterogeneous system}
\label{Ex_dh}

We now want to analyse the observed changes in the heterogeneous system. We first look at the 
case where $\Omega$ is discrete. Recall that we used
\ba
f(\omega)=\frac{1}{\sqrt{2\pi}\theta}\txte^{-\frac{(\omega-0.5)^2}{2\theta^2}}\frac{1}{C}.
\ea
with
\ba
C=\frac{1}{n}\sum\limits_{i=1}^{n}\frac{1}{\sqrt{2\pi}\theta}
\txte^{-\frac{\left(\frac{i}{n-1}-0.5\right)^2}{2\theta^2}}.
\ea
In Figure \ref{figure_10} we show, how this normalisation constant $C$ changes with $n$. Since 
$C$ is increasing in $n$ we have that, heuristically, for a fixed $\omega\in\Omega$ the value 
$f(\omega)$ decreases. A more rigorous way to state this is to say that if $\omega$ is in 
$\Omega$ for a discretisation level $n_1$ and for a level $n_2$ with $n_1<n_2$, then $f(\omega)$ is 
smaller for $n_2$. Now note that due to the fact that we have chosen most of our parameters 
independent of $\omega$, the linearisation of $\dot{I}(t,\omega)$ is given by
\ba
\dot{\cI}(t,\omega)&=\beta(t) f(\omega)\cI(t)-\gamma \cI(t,\omega)+\sigma\xi(t,\omega).
\ea
Thus, if $f(\omega)$ becomes smaller then $I(t,\omega)$ becomes more ``rigid'', i.e.\ it fluctuates 
less, which results in smaller value of $A$. But this in turn also means that as $R_0$ approaches 
$1$ the additional freedom to fluctuate increases. This results in a bigger increase in the 
variance of $I(t)$ and thus a smaller value of $\alpha$. Both of these effects are visible in 
Figure \ref{figure_4}. Furthermore, by comparing Figures \ref{figure_4} and \ref{figure_10} we see that the levelling out of $\alpha$ and $A$ coincides with the levelling out of $C$.

\begin{figure}
\centering
\includegraphics[width=0.5\textwidth]{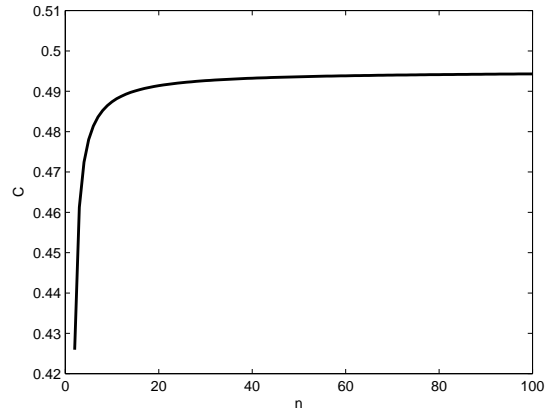}
\caption{The normalisation constant $C$ for $n=2,\dots,100$.}\label{figure_10}
\end{figure}

\subsection{Continuous heterogeneous system}
\label{Ex_ch}

For the heterogeneous system with continuous $\Omega$ we note that by definition we always 
have $I(t,\omega)\in[0,f(\omega)]$. If the parameter $p$ is big enough then $f(\omega)$ is 
large enough for all $\omega$ so that the upper bound is not important due to the fact that 
it is never reached. If $p$ is small however, then $f(\omega)$ also becomes small for some 
$\omega$. Thus we not only have a cut off at $0$ but also at $f(\omega)$. Thus, for small 
$p$ the variance is even more restricted. Also for these $\omega$ a rise of the average 
path will not result in more freedom in its fluctuation due to the restriction above by 
$f(\omega)$. Only when $p$ increases and the upper bound $f(\omega)$ becomes less and less 
important, then the increase of the variation is aided by a increased freedom to fluctuate, 
which leads to lower values of $\alpha$. In Figure \ref{figure_7} we see exactly this behaviour. Since these changes in $A$ and $\alpha$ depend solely on these cut off effects we expect that 
they vanish if we make the same simulations for the system without cut off. The results of 
such a simulation can be seen in Figure \ref{figure_11}, where indeed $p$ has no discernible 
influence on $A$ or $\alpha$.

\begin{figure}
\centering
\includegraphics[width=\textwidth]{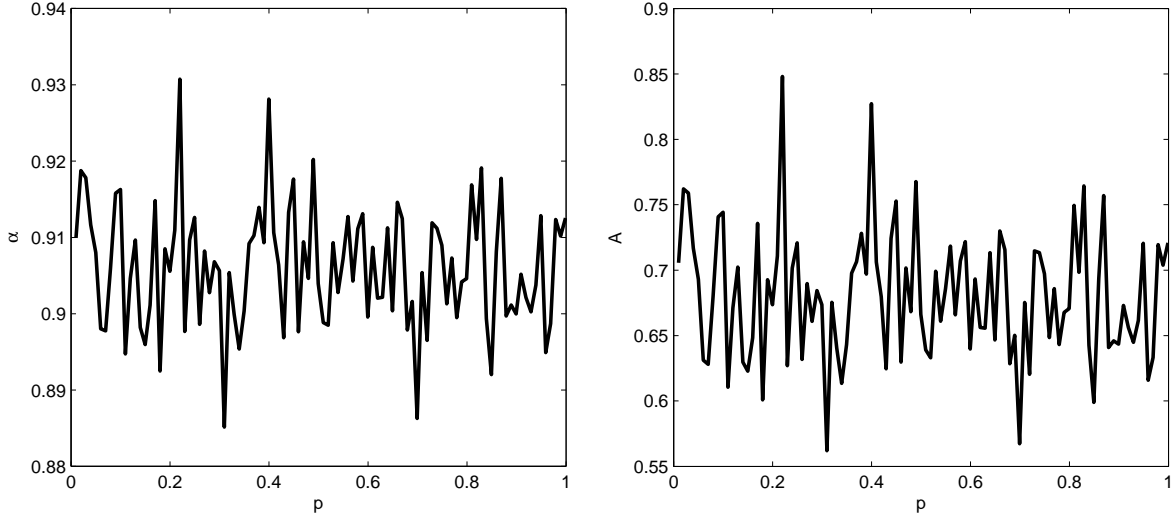}
\caption{We show, dependent on $p$, the change in the values $\alpha$ and $A$ of the best fit, calculated 
over $80\%$ of the time interval, for the heterogeneous system without cut off. There is no discernible influence of $p$ present.}
\label{figure_11}
\end{figure}

\subsection{Non-separable $\beta(t,\omega)$}
\label{Ex_ns}

In order to explain our observations of the system where $\beta(t,\omega)$ is not 
separable we first look at the linearisation of the equations. We assume that all 
functions in our equations are in $L^2(\Omega)$. We can write for the deterministic 
system (\ref{Eq_I})
\ba
\dot{I}(t,\omega)&=F(I(t,\omega))
\ea
with 
\ba
F(I(t,\omega))&=\beta(\omega)\int_\Omega q(\omega)I(t,\omega)
\dd\omega(f(\omega)-I(t,\omega))-\gamma(\omega)I(t,\omega)
\ea
The Fr\'echet-derivative of $F$, evaluated at $I^*$ and applied to $\zeta(\omega)$, is given by
\ba
\left[\frac{\dd F}{\dd I}(I^*)\right]\zeta(\omega)&=
\beta(\omega)\int_\Omega q(\omega)\zeta(\omega)\dd\omega~(f(\omega)-I^*)-
\beta(\omega)\int_\Omega q(\omega)I^*\dd\omega~\zeta(\omega)-\gamma(\omega)\zeta(\omega).
\ea
We define a linear operator $T$ by $T\cI(t,\omega)=\left[\frac{\dd F}{\dd I}(0)\right]\cI(t,\omega)$. 
In particular, the equation linearised at $0$ reads as
\ba
\dot{\cI}(t,\omega)&=T\cI(t,\omega)=f(\omega)\beta(\omega)
\int_\Omega q(\omega)\cI(t,\omega)\dd\omega-\gamma(\omega)\cI(t,\omega).
\ea
We are interested in the spectrum of the operator $T$. We consider this operator on the space 
$X=\{\zeta\in L^2(\Omega):\zeta(\omega)\in[0,f(\omega)]\}$, i.e.\ the subset of $L^2(\Omega)$ that 
consists of the points which are possible states of our system. A point $\lambda\in\C$ is an 
eigenvalue of $T$ if and only if there exists an eigenvector $\zeta\in X$ such that $T\zeta-\lambda\zeta=0$. 
This equation in its longer form is
\ba
f(\omega)\beta(\omega)\int_\Omega q(\omega)\zeta(\omega)\dd\omega
-\gamma(\omega)\zeta(\omega)-\lambda\zeta(\omega)=0.
\ea
We can rearrange this to get 
\ba
\zeta(\omega)=f(\omega)\frac{\beta(\omega)}{\gamma(\omega)+\lambda}\int_\Omega q(\omega)\zeta(\omega)\dd\omega.
\ea
Plugging this into the above equation yields
\ba
0&=f(\omega)\beta(\omega)\int_\Omega q(\omega)f(\omega)\frac{\beta(\omega)}{\gamma(\omega)
+\lambda}\dd\omega\int_\Omega q(\omega)\zeta(\omega)\dd\omega-f(\omega)\beta(\omega)\int_\Omega 
q(\omega)\zeta(\omega)\dd\omega\\
&=f(\omega)\beta(\omega)\int_\Omega q(\omega)\zeta(\omega)\dd\omega\left(\int_\Omega 
q(\omega)f(\omega)\frac{\beta(\omega)}{\gamma(\omega)+\lambda}\dd\omega-1\right)
\ea
An eigenvalue $\lambda$ of $T$ must therefore satisfy
\bea\label{Eq_EV}
\int_\Omega q(\omega)f(\omega)\frac{\beta(\omega)}{\gamma(\omega)+\lambda}\dd\omega=1.
\eea
or
\bea\label{Eq_alt_EV}
(\gamma(\omega)+\lambda)\zeta(\omega)=0\quad\text{and}\quad \int_\Omega q(\omega)\zeta(\omega)\dd\omega=0.
\eea
Note that any $\lambda$ that satisfies the first equation in (\ref{Eq_alt_EV}) is negative as $\gamma(\omega)$ is strictly positive. Furthermore, due to $q(\omega)$ being a positive function, any eigenvector to fulfil the second equation in (\ref{Eq_alt_EV}) has to be negative somewhere. The domain $X$ which we consider for $T$ does therefore not contain any eigenvectors satisfying this equation. For these reasons we consider only equation (\ref{Eq_EV}) to be relevant for our considerations. This equation has a unique solution. Note that for $\lambda=0$ the left hand side is 
exactly $R_0$. In particular, $\lambda$ is positive if $R_0>1$ and negative if $R_0<1$.
Note that if we assume in our calculations that $\gamma(\omega)$ is independent of $\omega$ then 
we can rearrange the equation (\ref{Eq_EV}) to identify $\lambda$ as\footnote{This is an example why we only consider equation (\ref{Eq_EV}): if $\gamma(\omega)$ is constant then (\ref{Eq_alt_EV}) has exactly one solution $\lambda_2=-\gamma$. This eigenvalue is always smaller than $\lambda$ and does therefore not concern us.}
\ba
	\lambda=\int_\Omega q(\omega)f(\omega)\beta(\omega)\dd\omega-\gamma.
\ea
If we now assume that $\beta(t,\omega)$ is a time dependent slow variable, then we can expect 
that this equation approximately describes the evolution of $\lambda$. In particular, if 
$\beta(t,\omega)$ is separable, $\beta(t,\omega)=\beta_0(t)\beta(\omega)$, then 
$\int_\Omega q(\omega)f(\omega)\beta(\omega)\dd\omega$ is a constant $\kappa$ and we get
\ba
\lambda(t)=\beta_0(t)\kappa-\gamma.
\ea
We know that $\lambda(t)$ is the exponential rate with, which the quasi-stationary system 
($\varepsilon=0$) would go to $0$. Hence, for negative $\lambda(t)$, the smaller it is the more 
``rigid'' the system is. If $\beta_0(t)$, and thus $\lambda(t)$, is increasing fast near the 
critical point then it is tightly locked to $0$ until shortly before $t_{crit}$. Therefore, we 
expect a sharp increase in the variation close to $t_{crit}$ and thus a low $\alpha$. We show 
this effect for the homogeneous system in Figure \ref{figure_12}.

\begin{figure}
\centering
\includegraphics[width=\textwidth]{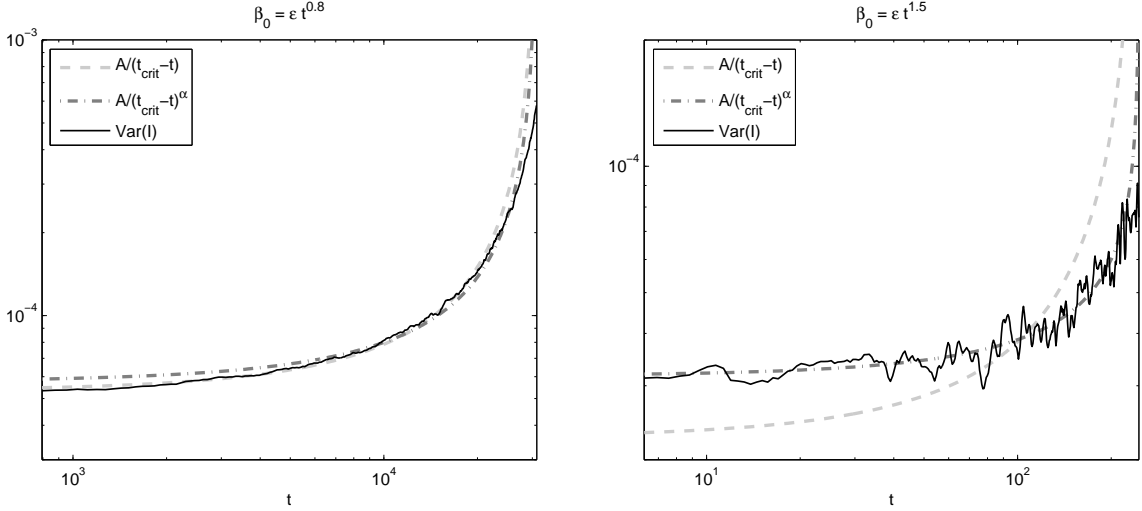}
\caption{The homogeneous system for different $\beta_0(t)$, with best fit over $90\%$ of the time interval. While for $\beta_0(t)=\varepsilon t^{0.8}$ the variance is still in the vicinity of the reference curve with the theoretical exponent $\alpha=1$ (although visibly below it), for $\beta_0(t)=\varepsilon t^{1.5}$ these curves are markedly different. This can also be seen in the value $\alpha$ of the best fit. In the former case it is $\alpha=0.8435$ while for the latter we get $\alpha=0.3795$.}
\label{figure_12}
\end{figure}

In our simulation for the heterogeneous system we achieve the same effect by changing 
the distribution $f(\omega)$. Recall that we used $\beta(t,\omega)=\varepsilon t^{\omega+0.5}$. 
Thus, for $\omega=0$ the increase is as the square root of $t$ while for $\omega=1$ it is 
polynomial. With the parameter $\mu$ we can control, which increase is dominant. If $\mu$ is 
small then $f(\omega)$ is concentrated on those $\omega$ for which $\beta(t,\omega)
\approx\varepsilon t^{0.5}$. Thus it grows slowly and we expect a higher $\alpha$. Also the 
system is less ``rigid" and allows for a higher overall variance in $I(t)$ and thus larger 
$A$. As $\mu$ increases, so does the derivative of $\lambda(t)$ and we expect a more rigid 
system (hence smaller $A$) and a faster increase of the variance near the critical point 
(smaller $\alpha$). Both of these behaviours can be seen in Figure \ref{figure_8}. In Figure 
\ref{figure_13} we show how $\lambda(t)$ behaves for different choices of $\mu$.

\begin{figure}
\includegraphics[width=\textwidth]{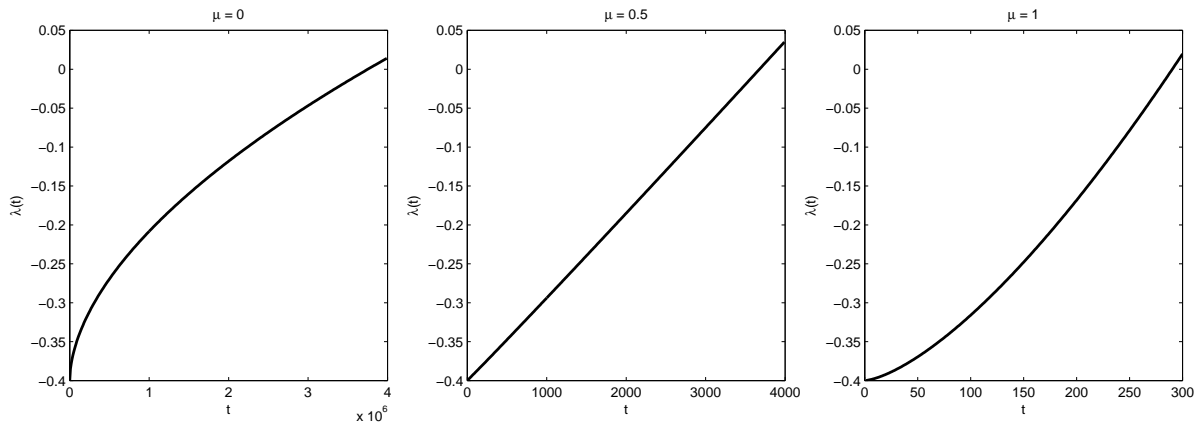}
\caption{The function $\lambda(t)$ for different choices of $\mu$. We can see that for $\mu=0$ the function $\lambda(t)$ is concave and for $\mu=1$ it is convex. For the intermediate value $\mu=0.5$ it is approximately linear. Furthermore we see a significant difference in the time it takes for $\lambda(t)$ to reach $0$.}
\label{figure_13}
\end{figure}

\section{Outlook}
\label{Outlook}

In this paper we have provided new insights on qualitative persistence and quantitative 
non-persistence of various dynamical phenomena in an SIS-model with heterogeneous populations.
The main conclusions are that one can expect a generic dynamical structure of a disease-free 
and endemic state, separated by a transition at $R_0$, to persist. However, the classical 
warning signs for tipping points have to be re-considered carefully in heterogeneous epidemic
models. In particular, we observed that the scaling law exponent for the inverse power-law increase 
of the variance decreases and in many cases lies below the theoretically predicted values of the
homogeneous population system. This means that using an extrapolation procedure with fixed exponent 
to predict the region, where the practical $R_0$-value lies, may not give the correct epidemic
threshold.\medskip

Since, this work is one of the first investigations of warning signs in
heterogeneous population models, it is clear that many open questions remain. Here we shall just
mention a few of these. From a mathematical perspective, it would be natural to ask for a full
analytical description of phenomena arising near bifurcation points for heterogeneous stochastic 
fast-slow systems. There are basically no results in this direction available yet, although 
recent significant progress in mathematical multiscale dynamics may suggest that a (partial) analysis
should be possible \cite{KuehnCT2}. From a biological and epidemic-modelling perspective, 
it would be interesting to compare different classes of models to the fast-slow heterogeneous stochastic 
SIS model we considered with a view towards heterogeneity, epidemic thresholds and warning 
signs for critical transitions. For example, this could include SIR models \cite{HL,N2}, adaptive network 
dynamics \cite{GDB,MPV,SS}, and stochastic partial differential equations \cite{A,KuehnFKPP}. 

Of course, many other extensions of the model, for example demographic changes, could also 
influence the behaviour. A focus on quantitative scaling laws could shed new light on 
which models are most appropriate for certain disease outbreaks, when results are compared with 
data.\medskip

Of course, our study here only carries out a few important baseline steps to achieve these future
goals. Nevertheless, it provides clear evidence for the need to further investigate the interplay
between various effects such as parameter drift, noise, and heterogeneity in the context of 
biological models, which exhibit bifurcation phenomena of high practical and social relevance.

\appendix

\section{The convergence in mean}
\label{ap:lemma}

Here we prove the auxillary result Lemma \ref{lem:convergence}, which shows that for the deterministic heterogenous 
SIS model we study, a suitable weighted mean of the infected population 
$J(t):=\int_{\Omega}q(\omega)I(t,\omega)~\txtd \omega$ has a well-defined limit.

\begin{proof}{ of Lemma \ref{lem:convergence}.}
We employ the same notation as in the proof of Theorem \ref{Th_stab}. In addition, define 
$J^*=\limsup J(t)$ and $J_*=\liminf J(t)$. Assume that $J(t)$ does not converge, 
then $J^*-J_*>0$. In the following five steps we lead this assumption to a contradiction.

\bino
Step 1: Define 
\ba
h(J(t),\omega) = \frac{ f(\omega)(\beta(\omega)J(t)+\eta(\omega))}{\beta(\omega)J(t)+\eta(\omega)+\gamma(\omega)}.
\ea
We get this function by setting $\dot I(t,\omega)=0$ in (\ref{Eq_I}) and solving for $I(t,\omega)$.
Further define $\Omega_f=\{\omega\in\Omega:f(\omega)>0\}$. Obviously $\Omega\backslash\Omega_f$ is of no interest as $I(t,\omega)=0$ there. Note that $\frac{\partial}{\partial J(t)}h(J(t),\omega)=\frac{f(\omega)\beta(\omega)\gamma(\omega)}{(b(\omega)J(t)+\eta(\omega)+\gamma(\omega))^2}>0$ on $\Omega_f$ such that $\frac{\ddd}{\ddd t} h(J(t),\omega)\gtrless0\Leftrightarrow \dot{J}(t)\gtrless0$. This also shows that $h(J(t),\omega)$ is monotone in $J(t)$. We have
\ban\label{I_der_2}
	\dot{I}(t,\omega)\!&=(\beta(\omega)J(t)+\eta(\omega))f(\omega)-(\beta(\omega)J(t)+\eta(\omega)+\gamma(\omega))I(t,\omega)\\
		&=(\beta(\omega)J(t)\!+\!\eta(\omega))f(\omega)\!-\!(\beta(\omega)J(t)\!+\!\eta(\omega)\!+\!\gamma(\omega))\left(h(J(t),\omega)\!+\!I(t,\omega)\!-\!h(J(t),\omega)\right)\\
		&=(\beta(\omega)J(t)+\eta(\omega)+\gamma(\omega))\left(h(J(t),\omega)-I(t,\omega)\right)
\ean
Note that we get
\ban \label{ind_Ider}
\dot{I}(t,\omega)\lessgtr0\Longleftrightarrow  I(t,\omega)\gtrless h(J(t),\omega).
\ean
This proves one of the claims in Lemma \ref{lem:convergence}. Using (\ref{I_der_2}) we get
\ban \label{h_der_est}
\left|\dt \right.&\left.\vphantom{\dt}\!\!h(J(t),\omega)\right|=\left|\frac{\partial}{\partial J(t)}h(J(t),\omega)\dot{J}(t)\right|=\left|\frac{\partial}{\partial J(t)}h(J(t),\omega)\int_\Omega q(\omega)\dot{I}(t,\omega)\dd\omega\right|\\
		&=\left|\frac{f(\omega)\beta(\omega)\gamma(\omega)}{(b(\omega)J(t)+\eta(\omega)+\gamma(\omega))^2}\!\int_\Omega q(\omega)(\beta(\omega)J(t)\!+\!\eta(\omega)\!+\!\gamma(\omega))\left(h(J(t),\omega)\!-\!I(t,\omega)\right)\!\dd\omega\right|\\
		&\leq\frac{f(\omega)\beta(\omega)\gamma(\omega)}{(b(\omega)J(t)\!+\!\eta(\omega)\!+\!\gamma(\omega))^2}\!\int_\Omega q(\omega)(\beta(\omega)J(t)\!+\!\eta(\omega)\!+\!\gamma(\omega))\left|h(J(t),\omega)\!-\!I(t,\omega)\right|\!\dd\omega\\
		&\leq f(\omega)\frac{\beta(\omega)\gamma(\omega)}{(\eta(\omega)+\gamma(\omega))^2}C\int_\Omega q(\omega) f(\omega)\dd\omega\\
		&=f(\omega)\frac{\beta(\omega)\gamma(\omega)}{(\eta(\omega)+\gamma(\omega))^2}C,
\ean
where $C=\sup_{\omega\in\Omega_f}(\beta(\omega)+\eta(\omega)+\gamma(\omega))$.

\bino
Step 2: Define
\ba
	\delta(\omega) &=h(J^*,\omega)-h(J_*,\omega)=f(\omega)\left(\frac{ (\beta(\omega)J^*+\eta(\omega))}{\beta(\omega)J^*+\eta(\omega)+\gamma(\omega)}-\frac{ (\beta(\omega)J_*+\eta(\omega))}{\beta(\omega)J_*+\eta(\omega)+\gamma(\omega)}\right)\\
	&=f(\omega)\frac{\beta(\omega)\gamma(\omega)\left(J^*-J_*\right)}{\left(\beta(\omega)J^*+\eta(\omega)+\gamma(\omega)\right)\left(\beta(\omega)J_*+\eta(\omega)+\gamma(\omega)\right)}.
\ea

For all $\varepsilon>0$ there exist arbitrarily large $t^*$ such that $J(t^*)<J_*+\varepsilon$. We want to give an estimate for $t(\omega)$ such that $h(J(t),\omega)\leq \delta(\omega)/3+h(J_*+\varepsilon,\omega)$ for $t\in(t^*,t(\omega))$. Because of (\ref{h_der_est}) we get
\ba
t(\omega)&\geq\frac{\delta(\omega)}{3f(\omega)\frac{\beta(\omega)\gamma(\omega)}{(\eta(\omega)+\gamma(\omega))^2}C}=\frac{f(\omega)\frac{\beta(\omega)\gamma(\omega)\left(J^*-J_*\right)}{\left(\beta(\omega)J^*+\eta(\omega)+\gamma(\omega)\right)\left(\beta(\omega)J_*+\eta(\omega)+\gamma(\omega)\right)}}{3f(\omega)\frac{\beta(\omega)\gamma(\omega)}{(\eta(\omega)+\gamma(\omega))^2}C}\\
&=\frac{\left(J^*-J_*\right)}{\left(\beta(\omega)J^*+\eta(\omega)+\gamma(\omega)\right)\left(\beta(\omega)J_*+\eta(\omega)+\gamma(\omega)\right)}\frac{(\eta(\omega)+\gamma(\omega))^2}{3C}\\
&\geq\frac{\inf_{\omega\in\Omega_f}\left((\eta(\omega)+\gamma(\omega))^2\right)\left(J^*-J_*\right)}{3C^3}=:\kappa.
\ea
Note that $\kappa>0$ and is independent of $\omega$.

\bino
Step 3: Because of 
\ba
\delta(\omega)\geq f(\omega)\frac{\inf_{\omega\in\Omega_f}(\beta(\omega)\gamma(\omega))\left(J^*-J_*\right)}{C^2}
\ea
we have for every $\varepsilon>0$ a $t_\varepsilon$ such that $h(J(t),\omega)<h(J^*,\omega)+\varepsilon\delta(\omega)/2$ for all $t>t_\varepsilon$. Assume now that $I(t,\omega)>h(J^*,\omega)+\varepsilon \delta(\omega)$. Then using (\ref{I_der_2}) we see that
\ba
	\left|\dot{I}(t,\omega)\right|&=(\beta(\omega)J(t)+\eta(\omega)+\gamma(\omega))\left|h(J(t),\omega)-I(t,\omega)\right|\\
	&\geq\inf_{\omega\in\Omega_f}(\eta(\omega)+\gamma(\omega)) \frac{\varepsilon}{2}\delta(\omega)\geq\frac{\varepsilon}{2}f(\omega)\inf_{\omega\in\Omega_f}\left(\frac{\delta(\omega)}{f(\omega)}\right)\inf_{\omega\in\Omega_f}(\eta(\omega)+\gamma(\omega)).
\ea
From this and (\ref{ind_Ider}) we get that for $t$ large enough we have $I(t,\omega)\leq h(J^*,\omega)+\varepsilon\delta(\omega)$ for all $\omega\in\Omega_f$.

\bino
Step 4: Choose $\varepsilon>0$ small enough such that the two inequalities
\ban \label{proof_ineqs}
	h(J^*,\omega)-\varepsilon\frac{\delta(\omega)}{3}\geq h(J_*+\varepsilon,\omega)+\frac{2}{3}\delta(\omega),\quad 2\varepsilon\leq\frac{\kappa}{3}\inf_{\omega\in\Omega_f}(\eta(\omega)+\gamma(\omega))
\ean
hold true. Now choose a $t^*$ such that $J(t^*)<J_*+\varepsilon$. Let $t^*$ also be large enough such that for all $t\geq t^*$ we have 
\ban \label{proof_tineq}
I(t,\omega)\leq h(J^*,\omega)+\varepsilon\frac{\delta(\omega)}{3}.
\ean
For every $\omega\in\Omega_f$ and for $t\in(t^*,t^*+\kappa)$ where $I(t,\omega)\geq h(J^*,\omega)-\varepsilon\delta(\omega)/3$ we have because of the first inequality in (\ref{proof_ineqs}) that $\left|I(t,\omega)-h(J(t),\omega)\right|\geq\delta(\omega)/3$. Thus, using the second inequality in (\ref{proof_ineqs}), we get
\ba
\left|\dot{I}(t,\omega)\right|&=(\beta(\omega)J(t)+\eta(\omega)+\gamma(\omega))\left|h(J(t),\omega)-I(t,\omega)\right|\\
	&\geq \frac{\delta(\omega)}{3}\inf_{\omega\in\Omega_f}(\eta(\omega)+\gamma(\omega))\geq\frac{2\delta(\omega)\varepsilon}{3\kappa}.
\ea
Combining this with (\ref{proof_tineq}) and using (\ref{ind_Ider}) yields
\ba
I(t^*+\kappa,\omega)\leq h(J^*,\omega)-\varepsilon\frac{\delta(\omega)}{3},\quad\omega\in\Omega_f.
\ea

\bino
Step 5: Let $\tau>t^*+\kappa$ be such that $h(J(\tau),\omega)\geq h(J^*,\omega)-\varepsilon\delta(\omega)/3$ for all $\omega\in\Omega_f$ and $J(\tau)>J(t)$ for $t\in(t^*+\kappa,\tau)$. Since $I(t,\omega)$ is increasing if and only if $h(J(t),\omega)>I(t,\omega)$ we have for all $\omega\in\Omega_f$ that $I(\tau,\omega)\leq h(J(\tau),\omega)$. Thus $\dot{I}(\tau,\omega)\geq0$ for all $\omega\in\Omega_f$ and consequently $\dot{J}(\tau)=\int_{\Omega_f}q(\omega)\dot{I}(\tau,\omega)\dd\omega\geq 0$. Therefore, if $I(t,\omega)=h(J(t),\omega)$ for any $t\geq \tau$, we have that $\dot{I}(t,\omega)=0$ while $\dot{J}(t)\geq 0$ and consequently $\frac{\ddd}{\ddd t} h(J(t),\omega)\geq0$. Hence, $I(t,\omega)\leq h(J(t),\omega)$ for all $t>\tau$ and all $\omega\in\Omega_f$. This in turn implies that $J(t)$ is monotonically increasing for $t>\tau$. Thus, $J(t)$ converges in contradiction to our assumption.

\end{proof}


\end{document}